\documentclass[11pt,reqno,a4paper]{amsart}

\usepackage[T1]{fontenc}
\usepackage[english]{babel}
\usepackage{microtype}
\usepackage{textcomp}
\usepackage{fbb}
\usepackage{amsmath, amssymb, amsthm, amscd, amsfonts}
\usepackage{mathtools}
\usepackage{mathrsfs}
\numberwithin{equation}{section}

\usepackage{graphicx}
\usepackage{tikz}
\usetikzlibrary{arrows.meta}
\usepackage{pgfplots}
\pgfplotsset{compat=1.15}

\usepackage{geometry}
\usepackage{fullpage}
\usepackage{setspace}
\usepackage{ragged2e}
\usepackage{wrapfig}
\usepackage{multicol}
\usepackage{float}
\usepackage{caption}
\usepackage{subcaption}
\usepackage{booktabs}
\usepackage{enumerate}
\usepackage{enumitem}
\setlist{
  listparindent=\parindent,
  parsep=0pt,
}

\definecolor{darkblue}{rgb}{0,0,0.7}
\definecolor{darkred}{rgb}{0.7,0,0}
\usepackage[
  colorlinks=true,
  linkcolor=darkred,
  citecolor=darkblue,
  urlcolor=blue,
  breaklinks=true
]{hyperref}
\usepackage{cleveref}

\newcommand{\reg}{\operatorname{reg}}

\newcommand{\supp}{\operatorname{Supp}}

\DeclareMathOperator{\indmatch}{indmatch}
\DeclareMathOperator{\aim}{aim}

\newcommand{\K}{\mathbb{K}}

\newtheorem{theorem}{Theorem}[section]
\newtheorem{definition}[theorem]{Definition}

\newtheorem{lemma}[theorem]{Lemma}
\newtheorem{proposition}[theorem]{Proposition}

\newtheorem{observation}[theorem]{Observation}

\newtheorem{conjecture}[theorem]{Conjecture}

\newtheorem{notation}[theorem]{Notation}
\newtheorem*{notation*}{Notation}
\newtheorem{discussion}[theorem]{Discussion}
\theoremstyle{definition}
\newtheorem*{convention}{Convention}

\begin{document}

\title[Regularity of Squarefree Powers of Edge Ideals of Whiskered Cycles]
{Regularity of Squarefree Powers of Edge Ideals of Whiskered Cycles}

\author{Sanjoy Das}
\address{Department of Mathematics, Indian Institute Of Technology  Bhubaneswar, Bhubaneswar, 752050, India}
\email{a25ma09004@iitbbs.ac.in}

\author{Arka Ghosh}
\address{Department of Mathematics, Indian Institute Of Technology  Bhubaneswar, Bhubaneswar, 752050, India}
\email{arkaghosh1208@gmail.com}

\author{S Selvaraja}
\address{Department of Mathematics, Indian Institute Of Technology  Bhubaneswar, Bhubaneswar, 752050, India}
\email{selvas@iitbbs.ac.in}

\thanks{AMS Classification 2020: 13D02,  05E40, 05C70, 13F55.}
\keywords{Castelnuovo-Mumford regularity, edge ideal, squarefree power, whiskered cycle, matching number}

\begin{abstract}
Let $G$ be a finite simple graph and let $I(G)$ denote its edge ideal. 
For $q \ge 1$, the $q$-th squarefree power $I(G)^{[q]}$ is generated by squarefree monomials corresponding to matchings of size $q$ in $G$. 
We denote by $\operatorname{reg}(-)$ the Castelnuovo-Mumford regularity. 
Das, Roy, and Saha~\cite{DRS24} conjectured that if $G = W(C_n)$ is a whiskered cycle, then
\[
\operatorname{reg}\big(I(G)^{[q]}\big)
= 2q + \left\lfloor \frac{n - q - 1}{2} \right\rfloor
~ \text{for all } 1 \le q \le \nu(G),
\]
where $\nu(G)$ denotes the matching number of $G$. 
In this paper, we confirm this conjecture by determining the exact value of $\operatorname{reg}(I(G)^{[q]})$.
\end{abstract}

\maketitle

\section{Introduction}

Let $R = \K[x_1,\dots,x_n]$ be a polynomial ring over a field $\K$, and let $I \subseteq R$ be a homogeneous ideal. 
Consider a minimal graded free resolution of $I$:
\[
0 \longrightarrow \bigoplus_{j} R(-j)^{\beta_{p,j}(I)} 
\longrightarrow \cdots \longrightarrow \bigoplus_{j} R(-j)^{\beta_{0,j}(I)} 
\longrightarrow I \longrightarrow 0,
\]
where $p \le n$. The \emph{Castelnuovo-Mumford regularity} (or simply the regularity) of $I$ is defined by
$\reg(I) = \max\{\, j-i \mid \beta_{i,j}(I) \neq 0 \,\}.$
This invariant measures the complexity of the minimal graded free resolution of $I$ and plays a fundamental role in commutative algebra.
There is a well-known correspondence between quadratic squarefree monomial ideals of $R$ and graphs (finite, with no loops and no multiple edges) on the vertex set $\{x_1,\dots,x_n\}$. 
Given a graph $G$ with edge set $E(G)$, its \emph{edge ideal} is defined by
$I(G) = (x_i x_j \mid \{x_i,x_j\} \in E(G)) \subseteq R.$
The regularity of edge ideals and their powers has been extensively studied (see, for example,~\cite{BBH17, Ha2}). 
A fundamental result due to Cutkosky-Herzog-Trung~\cite{CHT} and Kodiyalam~\cite{vijay} asserts that, for any homogeneous ideal $I$, the function $\reg(I^q)$ is eventually linear in $q$ for $q \gg 0$. 
In the case of edge ideals, sharper bounds are known. More precisely, for any graph $G$ and all integers $q \ge 1$,
\[
2q + \indmatch(G) - 1 \le \reg(I(G)^q) \le 2q + \nu(G) - 1,
\]
where $\indmatch(G)$ and $\nu(G)$ denote the induced matching number and the matching number of $G$, respectively; see~\cite{selvi_ha, JS21}.

Squarefree powers of edge ideals have recently attracted considerable attention. 
Let $G$ be a finite simple graph and let $I(G) \subseteq R$ be its edge ideal. 
For an integer $q \ge 1$, the \emph{$q$-th squarefree power} of $I(G)$ is defined by
\[
I(G)^{[q]}
=
\Big(
\prod_{v \in \bigcup_{i=1}^q e_i} v
\;\Big|\;
e_1,\dots,e_q \in E(G),\;
e_i \cap e_j = \emptyset \ \text{for } i \neq j
\Big).
\]
Equivalently, $I(G)^{[q]}$ is generated by squarefree monomials corresponding to matchings of size $q$ in $G$. 
In particular, $1 \le q \le \nu(G)$, and $I(G)^{[q]} = (0)$ for $q > \nu(G)$. 
The study of squarefree powers was initiated in~\cite{BHZ18} and further developed in~\cite{EHHM22}; see also 
\cite{TKAK25, CF26, CFE25, DRS24, DRS25, EF26, EHHS24, EH21, EH25, FM24, RS26, HS25, Sey24, Fa25}.

A natural problem is to establish bounds for the regularity of squarefree powers analogous to those known for ordinary powers. 
Erey, Herzog, Hibi, and Madani~\cite[Theorem~2.1]{EHHM22} proved the lower bound
$q + \indmatch(G) \le \reg(I(G)^{[q]}).$
In the same work, they conjectured the upper bound $\reg(I(G)^{[q]}) \le q + \nu(G)$. 
In~\cite{BHZ18}, Bigdeli, Herzog, and Nahandi showed that for any graph $G$,
$\reg(I(G)^{[\nu(G)]}) = 2\nu(G).$
Fakhari~\cite{Sey24} verified the conjectured upper bound for several classes of graphs and later confirmed it in full generality in~\cite{Fa25}.
Erey and Hibi~\cite{EH21} introduced the notion of $q$-admissible matchings, denoted by $\aim^*(G,q)$, namely matchings that admit a partition into pairwise disjoint submatchings satisfying suitable gap conditions and whose induced subgraphs are forests. 
They proved that if $G$ is a forest, then
$\reg(I(G)^{[q]}) \le q + \aim^*(G,q),$
and conjectured that equality holds, that is,
$\reg(I(G)^{[q]}) = q + \aim^*(G,q).$
This conjecture was later confirmed by Crupi, Ficarra, and Lax~\cite{CFE25} using Betti splitting.
Furthermore, Chau, Das, Roy, and Saha~\cite{TKAK25} introduced the notion of generalized $q$-admissible matchings, denoted by $\aim(G,q)$. 
These invariants satisfy $\aim^*(G,q) \le \aim(G,q)$ for all graphs $G$, with equality when $G$ is a forest. 
They also showed that if $G$ is a block graph, then
$\reg(I(G)^{[q]}) = q + \aim(G,q).$
In~\cite{DRS24}, Das, Roy, and Saha determined the regularity of squarefree powers for several families of graphs, including paths, cycles, and whiskered paths, explicitly in terms of the number of vertices. 
In the same work, they proposed the following conjecture.

\begin{conjecture}\cite[Conjecture 6.2]{DRS24}
Let $G = W(C_n)$. Then, for $1 \le q \le n$,
\[
\reg\big(I(G)^{[q]}\big)
= 2q + \left\lfloor \frac{n - q - 1}{2} \right\rfloor.
\]
\end{conjecture}

They proved the lower bound for this conjecture and established the upper bound 
$\reg(I(G)^{[q]}) \le 2 + \left\lfloor \frac{n - q}{2} \right\rfloor$
(see~\cite[Corollary~5.3]{DRS24}). 
In this paper, we confirm the conjecture by sharpening the upper bound to match the lower bound (see Theorem~\ref{main}).

\section{Preliminaries}\label{pre}

In this section, we recall some basic definitions and notation used throughout the paper. 
All graphs considered in this work are finite and simple. 
Let $G$ be a graph with vertex set $V(G)$ and edge set $E(G)$. 
For a subset $U \subseteq V(G)$, the \emph{induced subgraph} $G \setminus U$ is the graph obtained by removing the vertices in $U$ together with all edges incident to them. 
In particular, for a vertex $x \in V(G)$, we write $G \setminus x$ for $G \setminus \{x\}$. For a subset $A \subseteq V(G)$, we define
$N_G(A) = \{y \in V(G) \mid \exists\, x \in A \text{ such that } \{x,y\} \in E(G)\},$
and
$N_G[A] = N_G(A) \cup A.$
A \emph{matching} in $G$ is a set of pairwise disjoint edges. 
The \emph{matching number} of $G$ is defined by
$\nu(G) = \max\{\, |M| \mid M \text{ is a matching of } G \,\}.$
An \emph{induced matching} in $G$ is a matching $M$ such that no two edges of $M$ are joined by an edge of $G$. 
The \emph{induced matching number} of $G$ is defined by
$\indmatch(G) = \max\{\, |M| \mid M \text{ is an induced matching of } G \,\}.$
A matching $M$ is called \emph{perfect} if every vertex of $G$ is incident to an edge of $M$. 
In particular, if $G$ admits a perfect matching and $|V(G)|=n$, then $n$ is even and $|M|=\frac{n}{2}$.
Let $G$ be a graph. A subset $A \subseteq V(G)$ is called \emph{independent} if no two vertices in $A$ are adjacent. 
The \emph{independence number} of $G$ is
$\alpha(G)=\max\{\,|A| \mid A \subseteq V(G)\ \text{is independent}\,\}.$

A \emph{path} $P_n$ is the graph with vertex set $\{x_1,\ldots,x_n\}$ and edge set $\{\{x_i,x_{i+1}\} \mid 1 \le i \le n-1\}$. 
We denote this path by $x_1x_2\cdots x_n$.
A \emph{cycle} $C_n$ is a graph on vertices $\{x_1,\ldots,x_n\}$ with edges $\{x_i,x_{i+1}\}$ for $1 \le i \le n-1$ together with $\{x_n,x_1\}$.
A graph is \emph{bipartite} if its vertex set can be partitioned into two disjoint sets such that no edge has both endpoints in the same set. 

A graph $G$ is said to be \emph{connected} if for every pair of vertices $x,y \in V(G)$, there exists a path in $G$ joining $x$ and $y$. 
A \emph{connected component} of $G$ is a maximal connected subgraph of $G$, that is, a connected subgraph that is not properly contained in any larger connected subgraph of $G$.

The \emph{whisker graph} of a graph $G$, denoted by $W(G)$, is the graph with vertex set 
$V(G) \cup \{y_i \mid x_i \in V(G)\}$ and edge set 
$E(G) \cup \{\{x_i,y_i\} \mid x_i \in V(G)\}$.
The \emph{complement} of a graph $G$, denoted by $\overline{G}$, is the graph on the same vertex set $V(G)$ in which $\{x,y\} \in E(\overline{G})$ if and only if $\{x,y\} \notin E(G)$ for distinct vertices $x,y$. 
A graph is called \emph{chordal} if every cycle of length at least four has a chord. 
A graph is called \emph{co-chordal} if its complement is chordal. 

Let $G$ and $H$ be graphs. An \emph{isomorphism} from $G$ to $H$ is a bijection
$\varphi : V(G) \longrightarrow V(H)$
such that $\{u,v\} \in E(G)$ if and only if $\{\varphi(u),\varphi(v)\} \in E(H)$. 
If such a map exists, we say that $G$ and $H$ are \emph{isomorphic}, and write $G \cong H$. Let $G_1,\ldots,G_k$ be graphs with pairwise disjoint vertex sets. 
The \emph{disjoint union} of $G_1,\ldots,G_k$, denoted by
$\coprod_{i=1}^k G_i,$
is the graph $G$ with vertex set
$V(G) = \bigcup_{i=1}^k V(G_i)$
and edge set
$E(G) = \bigcup_{i=1}^k E(G_i).$

We recall the notion of even-connections introduced by Banerjee~\cite{banerjee}, which plays a key role in the study of colon ideals of powers of edge ideals.
\begin{definition}[\cite{banerjee}]
Let $G$ be a graph and let $e_1\cdots e_q$ be an $q$-fold product of edges of $G$ (with repetition allowed). 
For $u,v\in V(G)$, we write
$u \sim_{e_1\cdots e_q} v$
if there exist an integer $r\ge 1$ and a sequence $p_0,\ldots,p_{2r+1}$ of vertices such that
\begin{enumerate}
\item $p_0=u$ and $p_{2r+1}=v$;
\item $\{p_k,p_{k+1}\}\in E(G)$ for all $0\le k\le 2r$;
\item $\{p_{2k+1},p_{2k+2}\}=e_i$ for some $i$, for each $0\le k\le r-1$;
\item for every $i$,
$\bigl|\{\,k \mid \{p_{2k+1},p_{2k+2}\}=e_i\,\}\bigr|
\le
\bigl|\{\,j \mid e_j=e_i\,\}\bigr|.$
\end{enumerate}
In this case, the sequence $p_0,\ldots,p_{2r+1}$ is called an \emph{even-connection} between $u$ and $v$ (with respect to $e_1\cdots e_q$).
\end{definition}

We define the support of a collection of edges and of a monomial as follows.

\begin{definition}
Let $G$ be a graph and let $M \subseteq E(G)$. We define
$\supp(M) := \{x \in V(G) \mid x \in e \text{ for some } e \in M\}.$
For a monomial $m \in R = \K[x_1,\ldots,x_n]$, we define
$\supp(m) := \{x_i \mid x_i \text{ divides } m\}.$
\end{definition}
We now define a graph associated to a matching via even-connections.
\begin{definition}
Let $G$ be a graph and let $M = \{e_1,\ldots,e_q\}$ be a matching of $G$ with $q \ge 1$. 
We define a graph $G^{M}$ as follows: $V(G^{M}) = V(G) \setminus \supp(M)$, and for $x_i, x_j \in V(G^{M})$,
\[
\{x_i, x_j\} \in E(G^{M})
~ \Longleftrightarrow ~
\{x_i, x_j\} \in E(G)
\ \text{or}\
x_i \sim_{e_1 \cdots e_q} x_j.
\]
\end{definition}
We denote by $\mathcal{G}(I)$ the set of minimal monomial generators of a monomial ideal $I$. The following result relates colon ideals of squarefree powers to the graph $G^{M}$ defined via even-connections.
\begin{theorem}[\cite{Sey24}, Corollary~3.4]\label{Thm-even}
Let $G$ be a graph and let $M=\{e_1,\ldots,e_q\}$ be a matching of $G$ with $q\ge 1$. 
Set $u=e_1\cdots e_q\in \mathcal{G}(I(G)^{[q]})$. 
Then every minimal generator of $(I(G)^{[q+1]}:u)$ has degree $2$. Moreover,
$(I(G)^{[q+1]}:u)=I(G^{M}).$
\end{theorem}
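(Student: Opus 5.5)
The plan is to prove the two inclusions $I(G^{M}) \subseteq (I(G)^{[q+1]}:u)$ and $(I(G)^{[q+1]}:u) \subseteq I(G^{M})$ separately. The second inclusion will show that every element of the colon ideal is divisible by a quadratic generator of $I(G^{M})$. Together they give the equality, and hence that all minimal generators have degree $2$.

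\emph{The inclusion $\supseteq$.} Let $\{x,y\}\in E(G^{M})$, so $x,y\notin \supp(M)$.
\begin{itemize}
\item If $\{x,y\}\in E(G)$, then $M\cup\{\{x,y\}\}$ is a matching of size $q+1$. Hence $xyu\in I(G)^{[q+1]}$.
\item Otherwise there is an even-connection $x=p_0,p_1,\ldots,p_{2r+1}=y$ with $\{p_{2k+1},p_{2k+2}\}\in M$. Since $M$ is a matching, each edge of $M$ occurs once in $u$. By condition (4), the edges $\{p_{2k+1},p_{2k+2}\}$, $0\le k\le r-1$, are therefore $r$ distinct edges of $M$. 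They are pairwise disjoint, so $p_1,\ldots,p_{2r}$ are distinct, and none equals $x$ or $y$ because $x,y\notin\supp(M)$.
\end{itemize}
In the second case, remove these $r$ edges from $M$ and add the $r+1$ edges $\{p_{2k},p_{2k+1}\}$, $0\le k\le r$. These are edges of $G$ by condition (2), and they are pairwise disjoint. They are also disjoint from the remaining $q-r$ edges of $M$. The result is a matching of size $q+1$ whose product is exactly $xyu$. So $xy\in (I(G)^{[q+1]}:u)$ in both cases.

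\emph{The inclusion $\subseteq$.} The colon ideal of monomial ideals is generated by the monomials $w/\gcd(w,u)$ with $w\in\mathcal{G}(I(G)^{[q+1]})$. So it suffices to show each such quotient is divisible by some $xy$ with $\{x,y\}\in E(G^{M})$.

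Write $w$ as the product of the edges of a matching $N$ with $|N|=q+1$. Consider the symmetric difference $M\triangle N$, regarded as a subgraph of $G$. Every vertex has degree at most $2$ in it, so its components are paths and cycles whose edges alternate between $M$ and $N$. Since $|N|>|M|$, some component contains more $N$-edges than $M$-edges. That component must be a path $x=p_0,p_1,\ldots,p_{2r+1}=y$ beginning and ending with $N$-edges (Berge's augmenting path argument). Its endpoints are not covered by $M$: an $M$-edge at an endpoint would either extend the path or lie in $M\cap N$, and the latter is impossible since the endpoint is already covered by an $N$-edge of the path.
\begin{itemize}
\item If $r=0$, then $\{x,y\}\in N$ is an edge of $G$.
\item If $r\ge 1$, the path is an even-connection with respect to $e_1\cdots e_q$. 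Each $M$-edge is used once, so condition (4) holds.
\end{itemize}
In either case $\{x,y\}\in E(G^{M})$. Finally, $x,y\in\supp(w)\setminus\supp(u)$ and $w$ is squarefree, so $xy$ divides $w/\gcd(w,u)$.

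The only delicate point is the structural step in the $\subseteq$ direction. There one must check that the alternating component found really has both endpoints outside $\supp(M)$ and satisfies conditions (1)--(4) verbatim. The boundary case $r=0$ must also be routed to the ``edge of $G$'' clause of the definition of $G^{M}$, since even-connections require $r\ge1$.
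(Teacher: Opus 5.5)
Your proof is correct. Note that the paper gives no proof of this statement. It is quoted from Seyed Fakhari \cite{Sey24} and used as a black box (for instance in Case~1 of Theorem~\ref{main}, and whenever a colon by $m_\ell$ is replaced by $I(G^M)$). So your proposal is not an alternative route to the paper's argument; it supplies a self-contained combinatorial proof where the paper relies on a citation.

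Your two directions are the natural ones:
\begin{itemize}
\item For $\supseteq$, you swap along the even-connection. Condition (4), together with the fact that the edges of $M$ are distinct and pairwise disjoint, guarantees that the vertices $p_1,\dots,p_{2r}$ are distinct. So the exchange produces a genuine $(q+1)$-matching with product $xyu$.
\item For $\subseteq$, you run Berge's augmenting-path argument on $M\triangle N$. You correctly check that both endpoints avoid $\supp(M)$, including the case where an endpoint might be covered by an edge of $M\cap N$. You also handle the boundary case $r=0$ through the ``edge of $G$'' clause.
\end{itemize}

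Your approach makes visible something the citation hides: why the squarefree situation differs from Banerjee's theorem for ordinary powers. Since $u$ and $w$ are squarefree, $w/\gcd(w,u)$ is simply the product of the vertices in $\supp(N)\setminus\supp(M)$. Hence variables from $\supp(M)$ never occur, and no squares $x^2$ arise. In Banerjee's setting, by contrast, an even-connection from a vertex to itself contributes $x^2$.

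One point deserves to be stated explicitly. In the $\supseteq$ direction you use $x\neq y$. This is automatic because $G^M$ is a simple graph, but without it the $r+1$ new edges $\{p_{2k},p_{2k+1}\}$ would not be pairwise disjoint.
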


We recall the following known formula for the regularity of whisker graphs.

\begin{theorem}[{\cite[Theorem~1.3]{mohammad}, \cite[Lemma~21]{russ}}]\label{reg-known}
Let $G=W(H)$. Then
\[
\reg(I(G))=\alpha(H)+1=
\begin{cases}
2+\left\lfloor\frac{n-1}{2}\right\rfloor & \text{if } H=P_n,\\[4pt]
2+\left\lfloor\frac{n-2}{2}\right\rfloor & \text{if } H=C_n.
\end{cases}
\]
\end{theorem}
We collect some known results on the regularity of edge ideals that will be used throughout the paper.
\begin{theorem}
Let $G$ be a graph.
\begin{enumerate}
\item \cite[Lemma~8]{russ} If $G = G_1 \coprod G_2$, then
$\reg(I(G)) = \reg(I(G_1)) + \reg(I(G_2)) - 1.$

\item \cite[Lemma 3.1]{Ha2} If $H$ is an induced subgraph of $G$, then
$\reg(I(H)) \le \reg(I(G)).$

\item \cite[Remark~2.6]{selvi_ha} If $y \notin V(G)$, then
$\reg(I(G), y) = \reg(I(G)).$
\end{enumerate}
\end{theorem}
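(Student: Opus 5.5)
The statement collects three standard facts about edge ideals. I would prove each from the structure of minimal free resolutions and Hochster's formula, always passing through the quotient ring via $\reg(I)=\reg(R/I)+1$. Throughout I use that extending a monomial ideal to a larger polynomial ring (adding variables not appearing in its generators) is a flat base change. Such an extension therefore preserves graded Betti numbers, so I may always regard $I(G_1)$, $I(G_2)$ and $I(G)$ as living in whatever ambient ring is convenient.

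For (1), I would write $R=R_1\otimes_{\K}R_2$, where $R_i$ is the polynomial ring on $V(G_i)$. Then $I(G)=I(G_1)R+I(G_2)R$ and
\[
R/I(G)\cong R_1/I(G_1)\otimes_{\K}R_2/I(G_2).
\]
Let $F_\bullet$ and $F'_\bullet$ be minimal graded free resolutions of $R_1/I(G_1)$ over $R_1$ and of $R_2/I(G_2)$ over $R_2$. The tensor product $F_\bullet\otimes_{\K}F'_\bullet$ is a graded free $R$-resolution of $R/I(G)$ by the Künneth formula over a field. It is minimal, because its differentials still have entries in the maximal ideal. Hence
\[
\beta_{i,j}(R/I(G))=\sum_{i_1+i_2=i,\ j_1+j_2=j}\beta_{i_1,j_1}(R_1/I(G_1))\,\beta_{i_2,j_2}(R_2/I(G_2)).
\]
Maximizing $j-i$ shows that regularity is additive on quotients: $\reg(R/I(G))=\reg(R_1/I(G_1))+\reg(R_2/I(G_2))$. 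Translating to ideals gives the stated formula with the $-1$.

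For (2), I would use Hochster's formula for the independence complex $\Delta(G)$, whose Stanley--Reisner ideal is $I(G)$:
\[
\beta_{i,j}(I(G))=\sum_{W\subseteq V(G),\,|W|=j}\dim_{\K}\tilde H_{j-i-2}(\Delta(G)_W;\K).
\]
Since $H$ is induced, $\Delta(H)=\Delta(G)_{V(H)}$. Consequently $\Delta(H)_W=\Delta(G)_W$ for every $W\subseteq V(H)$. Each summand contributing to $\beta_{i,j}(I(H))$ therefore also contributes to $\beta_{i,j}(I(G))$, which gives $\beta_{i,j}(I(H))\le\beta_{i,j}(I(G))$ and hence the inequality of regularities.

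For (3), I would treat $(I(G),y)$ as $I(G)R'+(y)$ in $R'=R[y]$, so that $R'/(I(G),y)\cong R/I(G)\otimes_{\K}\K[y]/(y)$. This is exactly the situation of (1) with the second factor $\K[y]/(y)$. Its minimal resolution is the Koszul complex $0\to\K[y](-1)\to\K[y]$, which has regularity $0$. Thus $\reg(R'/(I(G),y))=\reg(R/I(G))$, and adding $1$ to both sides gives the claim.

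The only delicate point is bookkeeping. One must keep the shift between $\reg(I)$ and $\reg(R/I)$ consistent, and justify minimality of the tensor-product resolution. Neither step is a real obstacle.
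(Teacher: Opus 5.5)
Your three arguments are correct; the difference from the paper is only that you prove what the paper cites. The paper gives no proof of this theorem: it imports (1)--(3) directly from the cited references. Your proposal is a self-contained version of the standard justifications behind those citations:
\begin{itemize}
\item for (1) and (3), you tensor minimal graded resolutions over $\K$, and minimality holds because the differentials $d\otimes 1\pm 1\otimes d'$ still have entries in the maximal ideal;
\item for (2), you restrict Hochster's formula to subsets $W\subseteq V(H)$, using $\Delta(H)=\Delta(G)_{V(H)}$.
\end{itemize}
Citing keeps the paper short. Your version makes explicit the ambient-ring bookkeeping, which flat base change handles, and the shift between $\reg(I)$ and $\reg(R/I)$.

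Working through $\reg(R/I)$ also makes the edgeless case transparent. If one of the graphs has no edges, (1) holds exactly under the convention $\reg(0)=1$, that is, $\reg(R/0)=0$. The paper relies on this convention tacitly in the proof of Theorem~\ref{main}, where it declares the regularity of a block $W(H_i)^{B_i}\setminus S_{H_i}$ consisting only of isolated vertices to be $1$.
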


\begin{convention}
Throughout the paper, any expression for $\reg(I)$ involving $\left\lfloor \cdot \right\rfloor$ is taken to be $0$ whenever it is negative, since $\reg(I) \ge 0$.
\end{convention}

\section{Auxiliary Results}\label{AR}
In this section, we collect several auxiliary results that will be used in the proof of the main theorem. We begin with the following proposition, which describes the behavior of even-connections under disjoint unions.

\begin{proposition}\label{disjoint-even}
Let $G=\coprod_{i=1}^k G_i$ be a disjoint union of graphs, and let $M$ be a matching of $G$. Then
$G^M=\coprod_{i=1}^k (G_i)^{M_i},$
where $M_i := M \cap E(G_i)$ for each $1 \le i \le k$.
\end{proposition}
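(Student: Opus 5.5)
We prove the equality by comparing vertex sets and edge sets directly. We use the obvious decomposition $M=\bigsqcup_{i=1}^k M_i$: every edge of $G$ lies in exactly one $E(G_i)$, so $M_i$ is a matching of $G_i$ and $\supp(M)=\bigsqcup_i \supp(M_i)$ with $\supp(M_i)\subseteq V(G_i)$. (If some $M_i$ is empty, we interpret $(G_i)^{M_i}$ as $G_i$ itself, since then no even-connection can occur inside $G_i$.) From this,
\[
V(G^M)=V(G)\setminus\supp(M)=\bigsqcup_{i=1}^k \bigl(V(G_i)\setminus \supp(M_i)\bigr)=\bigsqcup_{i=1}^k V\bigl((G_i)^{M_i}\bigr),
\]
so the vertex sets agree.

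For the edges, the key step is to analyze an even-connection $p_0,\ldots,p_{2r+1}$ in $G$ with respect to $u=\prod_{e\in M}e$. Consecutive vertices $p_k,p_{k+1}$ are adjacent in $G$, and $G$ has no edges between distinct components $G_i$ and $G_j$. Hence the whole sequence lies in a single $V(G_i)$; in particular $p_0$ and $p_{2r+1}$ lie in the same $G_i$. Each edge $\{p_{2k+1},p_{2k+2}\}$ is some $e\in M$ with both endpoints in $V(G_i)$, so it belongs to $M_i$. Since $M$ is a matching, each edge occurs exactly once in the product, so the multiplicity condition (4) says exactly that each edge of $M_i$ is used at most once. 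This condition is the same whether we read it with respect to $M$ or to $M_i$. Therefore the sequence is an even-connection in $G_i$ with respect to $\prod_{e\in M_i}e$. Conversely, an even-connection in $G_i$ with respect to $M_i$ is one in $G$ with respect to $M$, because $E(G_i)\subseteq E(G)$ and $M_i\subseteq M$.

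Combining these facts, for $x,y\in V(G^M)$ we have $\{x,y\}\in E(G^M)$ if and only if $x,y$ lie in a common $V(G_i)$ and either $\{x,y\}\in E(G_i)$ or $x\sim_{M_i} y$ in $G_i$. This is exactly the condition $\{x,y\}\in E((G_i)^{M_i})$. In particular, $G^M$ has no edges between different components, so $G^M=\coprod_i (G_i)^{M_i}$.

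The only point needing care is the one above: showing that an even-connection cannot leave its component, and that the multiplicity condition transfers between $M$ and $M_i$. Both follow immediately from the definitions.
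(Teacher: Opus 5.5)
Your proof is correct and takes essentially the same route as the paper. Both arguments note that consecutive vertices of an even-connection are adjacent in $G$, so the sequence stays inside one $V(G_i)$ and uses only edges of $M_i$; hence $u \sim_M v$ in $G$ if and only if $u \sim_{M_i} v$ in $G_i$, and the edge sets are then compared. Your explicit checks of the vertex sets and of the multiplicity condition, and your convention $(G_i)^{\emptyset}=G_i$ (consistent with the paper's later use of $W(H_i)^{B_i}$ when $B_i=\emptyset$), are details the paper leaves implicit.
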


\begin{proof}
Write $M=\{e_1,\ldots,e_s\}$ and set $M_i := M \cap E(G_i)$ for each $i$. Then
$M=\coprod_{i=1}^k M_i$ and $\supp(M)=\bigcup_{i=1}^k \supp(M_i)$.
We first claim that for any $u,v \in V(G_i)$,
\[
u \sim_M v \text{ in } G
~ \Longleftrightarrow ~
u \sim_{M_i} v \text{ in } G_i.
\]
Suppose $u \sim_M v$ in $G$. Then there exists an even-connection
$p_0,\ldots,p_{2r+1}$ from $u$ to $v$. Since $G=\coprod_{i=1}^k G_i$, each edge $\{p_\ell,p_{\ell+1}\}$ lies in a unique component. As $u,v \in V(G_i)$, it follows that $p_\ell \in V(G_i)$ for all $\ell$, and hence the entire path lies in $G_i$. Thus $u \sim_{M_i} v$ in $G_i$.
Conversely, if $u \sim_{M_i} v$ in $G_i$, then the same sequence of vertices gives an even-connection in $G$ with respect to $M$, since $M_i \subseteq M$.

Now let $\{u,v\} \in E(G^M)$. Then either $\{u,v\} \in E(G)$ or $u \sim_M v$. In either case, $u$ and $v$ lie in the same component $G_i$. By the claim above, $\{u,v\} \in E((G_i)^{M_i})$, and hence
$E(G^M) \subseteq \bigcup_{i=1}^k E((G_i)^{M_i}).$
The reverse inclusion follows similarly: if $\{u,v\} \in E((G_i)^{M_i})$, then either $\{u,v\} \in E(G_i) \subseteq E(G)$ or $u \sim_{M_i} v$, which implies $u \sim_M v$. Thus $\{u,v\} \in E(G^M)$.
Therefore,
$G^M=\coprod_{i=1}^k (G_i)^{M_i},$
as claimed.
\end{proof}

The following proposition describes the behavior of even-connections under vertex deletion.

\begin{proposition}\label{del-even}
Let $G$ be a graph and $M$ be a matching of $G$. If $x \notin \supp(M)$ for some $x \in V(G)$, then
$G^M \setminus x = (G \setminus x)^M.$
\end{proposition}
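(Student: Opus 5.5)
The plan is to compare the vertex sets and then the edge sets of the two graphs directly. Both have vertex set $V(G)\setminus(\supp(M)\cup\{x\})$. On the left we remove $x$ from $V(G^M)=V(G)\setminus\supp(M)$. On the right, $M$ is still a matching of $G\setminus x$ because $x\notin\supp(M)$, and $V((G\setminus x)^M)=(V(G)\setminus\{x\})\setminus\supp(M)$. So it suffices to show that, for $u,v$ in this common vertex set, $\{u,v\}\in E(G^M)$ if and only if $\{u,v\}\in E((G\setminus x)^M)$.

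Ordinary edges cause no difficulty: $\{u,v\}\in E(G)$ if and only if $\{u,v\}\in E(G\setminus x)$, since $u,v\neq x$. The work is therefore in showing that, for $u,v\neq x$, we have $u\sim_M v$ in $G$ if and only if $u\sim_M v$ in $G\setminus x$.

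The backward direction is immediate. An even-connection $p_0,\ldots,p_{2r+1}$ in $G\setminus x$ uses only edges of $G\setminus x\subseteq G$, and its conditions (3) and (4) refer only to $M$. Hence it is also an even-connection in $G$.

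For the forward direction, take an even-connection $p_0,\ldots,p_{2r+1}$ in $G$ from $u$ to $v$. We must check that no $p_\ell$ equals $x$.
\begin{enumerate}
\item The endpoints are safe: $p_0=u\neq x$ and $p_{2r+1}=v\neq x$.
\item Every interior vertex lies on some edge of $M$. For $1\le \ell\le 2r$, the vertex $p_\ell$ belongs to a pair $\{p_{2k+1},p_{2k+2}\}$ with $0\le k\le r-1$, and condition (3) makes this pair an edge $e_i\in M$. So $p_\ell\in\supp(M)$, and since $x\notin\supp(M)$, we get $p_\ell\neq x$.
\end{enumerate}
Thus every consecutive pair $\{p_\ell,p_{\ell+1}\}$ is an edge of $G$ avoiding $x$, hence an edge of $G\setminus x$. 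Conditions (3) and (4) are unchanged, so the same sequence is an even-connection in $G\setminus x$.

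The main point needing care is this observation that every interior vertex of an even-connection lies in $\supp(M)$. Once it is recorded, both inclusions of edge sets are routine, and the two graphs coincide.
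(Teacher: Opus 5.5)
Your proof is correct and follows the same route as the paper: compare the edge sets and show that $u\sim_M v$ in $G$ if and only if $u\sim_M v$ in $G\setminus x$. The paper only asserts that some even-connection avoiding $x$ exists; your observation that every interior vertex of an even-connection lies in $\supp(M)$ shows that every even-connection avoids $x$, which is exactly the justification that step needs.
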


\begin{proof}
We show that the two graphs have the same edge set. Let $u,v \in V(G)\setminus \{x\}$. Then
\[
\{u,v\} \in E(G^M \setminus x)
~ \Longleftrightarrow ~
\{u,v\} \in E(G^M).
\]
By definition, this holds if and only if either $\{u,v\} \in E(G)$ or $u \sim_M v$ in $G$.
Since $x \notin \supp(M)$, no edge of $M$ is incident to $x$. Hence, if there exists an even-connection between $u$ and $v$ with respect to $M$ in $G$, then there exists one that avoids $x$. Therefore,
$u \sim_M v \text{ in } G
~ \Longleftrightarrow ~
u \sim_M v \text{ in } G \setminus x.
$
It follows that
$\{u,v\} \in E(G^M \setminus x)
~ \Longleftrightarrow ~
\{u,v\} \in E((G \setminus x)^M).$
Thus,
$E(G^M \setminus x) = E((G \setminus x)^M),$
and hence the result follows.
\end{proof}

We fix notation and an ordering on the minimal generators of the squarefree powers of the edge ideal of a cycle. This ordering will be used throughout to compare generators and to control colon ideals arising in subsequent arguments.
\begin{notation}\label{setup-cycle}
Let $C_n$ be the cycle with vertex set $V(C_n)=\{x_1,\dots,x_n\}$ and edge set
$E(C_n)=\bigl\{\{x_1,x_2\}, \{x_2,x_3\}, \dots, \{x_{n-1},x_n\}, \{x_1,x_n\}\bigr\}.$
Fix $q \ge 1$. Then $I(C_n)^{[q-1]}$ is minimally generated by monomials corresponding to matchings of size $q-1$. 
For such a matching $M$, set
\[
m_M := \prod_{x \in \bigcup_{e \in M} e} x.
\]
We fix the following ordering on the edges of $C_n$:
\[
\{x_1,x_2\} < \{x_2,x_3\} < \cdots < \{x_{n-1},x_n\} < \{x_1,x_n\}.
\]
For a matching $M=\{e_1,\dots,e_{q-1}\}$, we arrange its edges in increasing order with respect to the above ordering and denote this sequence by $e(M)=(e^{(1)},\dots,e^{(q-1)})$.
This induces an order on the minimal generators as follows: for two matchings $M$ and $M'$, we declare $m_M < m_{M'}$ if there exists an index $i$ such that $e^{(j)}=e'^{(j)}$ for all $j<i$, and $e^{(i)} < e'^{(i)}$. If distinct matchings yield the same monomial, we choose the one whose sequence $e(M)$ is minimal with respect to this ordering. 
We write
\[
m_1 < \cdots < m_t
\]
for the resulting ordered list of minimal generators of $I(C_n)^{[q-1]}$.
\end{notation}

The following lemma controls the associated colon ideals under the above ordering.

\begin{lemma}\label{cycle-lemma}
Assume Notation~\ref{setup-cycle}, and let $m_1<\cdots<m_t$ be the minimal generators of $I(C_n)^{[q-1]}$. For $j<i$, either
$(m_j:m_i)\subseteq (I(C_n)^{[q]}:m_i),$
or there exists $r<i$ such that $(m_r:m_i)$ is generated by a variable and
$(m_j:m_i)\subseteq (m_r:m_i).$
\end{lemma}

\begin{proof}
Let $m_j,m_i\in \mathcal{G}(I(C_n)^{[q-1]})$ with $j<i$, and let $M_j,M_i$ be the corresponding matchings. Let $\{x_a,x_{a+1}\}$ be the smallest edge of $M_j$.
If $\{x_a,x_{a+1}\}$ is disjoint from $M_i$, then $M_i\cup\{\{x_a,x_{a+1}\}\}$ is a matching of size $q$. Hence
$(m_j:m_i)\subseteq (x_ax_{a+1})\subseteq (I(C_n)^{[q]}:m_i).$
Otherwise, $\{x_a,x_{a+1}\}$ is adjacent to an edge of $M_i$. Now we discuss the following cases:

\vskip 1mm
\noindent
\textit{Case-1:} Suppose first that $\{x_a,x_{a+1}\}$ is adjacent only with $\{x_{a+1},x_{a+2}\}\in M_i.$ Then there is a path $x_{a}x_{a+1}\dots x_{a+t}$ such that $\{x_{a+i},x_{a+i+1}\}\in M_j \text{ if } i \text{ is even and } \{x_{a+i},x_{a+i+1}\}\in M_i \text{ if } i \text{ is odd} $ and $\{x_{a+t},x_{a+t+1}\}\notin M_j,
M_i$. 
Suppose $\{x_{a+t-1},x_{a+t}\}\in M_i$
Then
$(m_j:m_i)\subseteq (x_a).$
Set
$M_r := (M_i\setminus\{\{x_{a+1},x_{a+2}\}\})\cup\{\{x_a,x_{a+1}\}\}.$
Then $M_r$ is a matching of size $q-1$, and by the ordering in Notation~\ref{setup-cycle}, we have $m_r<m_i$. Moreover,
$(m_r:m_i)=(x_a)\supseteq (m_j:m_i).$
Suppose $\{x_{a+t-1},x_{a+t}\}\in M_j$ then there is an even connection between $x_a \text{ and } x_{a+t}$ so $(m_j:m_i)\subseteq (I(C_n)^{[q]}:m_i).$

\vskip 1mm
\noindent
\textit{Case-2:}
 Suppose $\{x_a,x_{a+1}\}$ is adjacent with $\{x_{a+1},x_{a+2}\} \text{ and } \{x_{a-1},x_a\}\in M_i$. 
  Observe that in this case $\{x_a,x_{a+1}\}=\{x_1,x_2\} \text{ and } \{x_{a-1},x_a\}= \{x_n,x_1\}.$ 
 Then there is a path $P:=z_p\dots z_1x_nx_1x_2\dots x_t$ where edges are alternatively in $M_i \text{ and } M_j.$ Suppose $\{z_p,z_{p-1}\}\in M_i, \{x_{t-1},x_t\}\in M_j$. Then choose $M_r=M_i\setminus \{ \cup_{e\in P\cap M_i} e\}\bigcup_{e\in P\cap M_j}e.$ Then $M_r$ is a matching of size $q-1$. Also $m_r<m_i$ and $(m_r: m_i)=(x_t) \supseteq (m_j:m_i).$
 Now suppose $\{z_p,z_{p-1}\}\in 
M_j,\{x_{t-1},x_t\}\in M_i.$ Then similarly we can proceed. Suppose $\{z_p,z_{p-1}\}\in 
M_j,\{x_{t-1},x_t\}\in M_j.$ Then there exists an even connection so we are done. Suppose $\{z_p,z_{p-1}\}\in 
M_i,\{x_{t-1},x_t\}\in M_i.$ Since $|M_i|=|M_j|,$ either there exists an even connection or there exists an edge of $M_j$ which is disjoint from any other edges of $M_i$. In all 
cases we are done.
\end{proof}

 We now extend the above notation and ordering to the whiskered cycle.

\begin{notation}\label{setup-whisker}
Let $G = W(C_n)$ with $V(C_n)=\{x_1,\ldots,x_n\}$ and 
$V(G)=\{x_1,\ldots,x_n, y_1,\ldots,y_n\}$. 
For $0 \le j \le q-2$, write
$\mathcal{G}\big(I(C_n)^{[q-1-j]}\big)
=\{m_{j,1},\ldots,m_{j,\alpha_j}\},$
where each $m_{j,a}$ corresponds to a $(q-1-j)$-matching of $C_n$. 
For $j = q-1$, we set $\alpha_{q-1}=1$ and $m_{q-1,1}=1$.
By Lemma~\ref{cycle-lemma}, we fix an ordering
$m_{j,1}<\cdots<m_{j,\alpha_j}$
(for $0 \le j \le q-2$) such that for $1 \le a < b \le \alpha_j$,
$(m_{j,a}:m_{j,b}) \subseteq (I(C_n)^{[q-j]}:m_{j,b}),$
or there exists $r<b$ such that $(m_{j,r}:m_{j,b})$ is generated by a variable and
$(m_{j,a}:m_{j,b}) \subseteq (m_{j,r}:m_{j,b}).$

Let
$\mathcal W=\big\{\{x_i,y_i\}\mid 1 \le i \le n\big\}.$
For $0 \le j \le q-1$ and $1 \le a \le \alpha_j$, set
\[
\mathcal S(m_{j,a})
=
\big\{S \subseteq \mathcal W \mid |S|=j,\ 
\{x_i : \{x_i,y_i\} \in S\} \cap \supp(m_{j,a}) = \emptyset \big\}.
\]
For each $m_{j,a}$, we fix an arbitrary ordering on the set $\mathcal S(m_{j,a})$.
For $S \in \mathcal S(m_{j,a})$, define
\[
x_{M_{j,a,S}} := m_{j,a}\prod_{\{x_i,y_i\}\in S} x_i y_i,
\]
where $M_{j,a,S}$ is the corresponding $(q-1)$-matching of $G$. 
For $0 \le j \le q-1$, set
\[
\mathcal M^{q-1}_j
=
\{x_{M_{j,a,S}} \mid 1 \le a \le \alpha_j,\ S \in \mathcal S(m_{j,a})\},
\qquad
\mathcal M^{q-1}
=
\bigcup_{j=0}^{q-1} \mathcal M^{q-1}_j.
\]
Then $\mathcal M^{q-1} = \mathcal{G}(I(G)^{[q-1]})$.
We order $\mathcal M^{q-1}$ by
$\mathcal M^{q-1}_0 \prec \cdots \prec \mathcal M^{q-1}_{q-1},$
and within each $\mathcal M^{q-1}_j$ by declaring
$x_{M_{j,a,S}} \prec x_{M_{j,b,T}}$
if either $a<b$, or $a=b$ and $S$ precedes $T$ in the chosen ordering of $\mathcal S(m_{j,a})$.
Thus, we obtain an ordered list of minimal generators
\[
m_1 \prec m_2 \prec \cdots \prec m_\alpha.
\]
\end{notation}

The following lemma extends the previous ordering property to the whiskered cycle.
\begin{lemma}\label{order-whisker}
Assume Notation~\ref{setup-whisker}. Let 
$m_1 \prec m_2 \prec \cdots \prec m_\alpha$
be the minimal generators of $I(G)^{[q-1]}$, ordered as above. 
Fix $i$ and let $j<i$. Then either 
$(m_j:m_i)\subseteq (I(G)^{[q]}:m_i)$,
or there exists $r<i$ such that $(m_r:m_i)$ is generated by a variable and 
$(m_j:m_i)\subseteq (m_r:m_i)$. 
Consequently,
\[
\big((I(G)^{[q]},m_1,\ldots,m_{i-1}):m_i\big)
= (I(G)^{[q]}:m_i) + (u_1,\ldots,u_s)
\]
for some variables $u_1,\ldots,u_s$.
\end{lemma}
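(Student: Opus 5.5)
We compare $m_j$ and $m_i$ through their levels. Write $m_i = x_{M_{k,b,T}} = m_{k,b}\prod_{\{x_\ell,y_\ell\}\in T}x_\ell y_\ell$ and $m_j = x_{M_{k',a,S}}$ with $m_j \prec m_i$, so that $k'\le k$. The key observation throughout is elementary. If some edge $e$ of $M_j=M_{k',a,S}$ is disjoint from $\supp(M_i)$, then $M_i\cup\{e\}$ is a $q$-matching of $G$. Hence $(m_j:m_i)\subseteq (e:m_i)\subseteq (I(G)^{[q]}:m_i)$, and we are in the first alternative. So we may assume every edge of $M_j$ meets $\supp(M_i)$, and we split into cases according to which edge of $M_j$ is involved.

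\emph{Case A: $M_j$ contains a whisker $\{x_\ell,y_\ell\}\notin T$.} Since $y_\ell$ has degree one, $\{x_\ell,y_\ell\}$ meets $M_i$ only at $x_\ell$. Thus $x_\ell$ lies on a cycle edge $\{x_\ell,x_{\ell\pm1}\}$ of $m_{k,b}$. Let $M_r$ be the matching obtained from $M_i$ by replacing $\{x_\ell,x_{\ell\pm1}\}$ with $\{x_\ell,y_\ell\}$. Then $m_r=x_{\ell\pm1}^{-1}y_\ell\, m_i$, so $(m_r:m_i)=(y_\ell)$. Moreover $y_\ell$ divides $m_j$ but not $m_i$, so $(m_j:m_i)\subseteq(y_\ell)$. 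The matching $M_r$ has level $k+1$ if $k+1\le q-1$. In that case $m_r\succ m_i$, which is the wrong direction. So this replacement should instead be run \emph{backwards}. When $k'<k$, $M_i$ has a whisker $\{x_p,y_p\}\in T$ not in $M_j$, and we swap that whisker out for a cycle edge. Concretely, if $k'<k$, then $|T|>|S|$, so there is $\{x_p,y_p\}\in T\setminus S$. Then $x_p\in\supp(m_j)$ or $x_p\notin\supp(M_j)$. If $x_p\notin\supp(M_j)$, we look instead at the cycle edges of $m_{k',a}$ near $p$. We replace $\{x_p,y_p\}$ by a cycle edge $\{x_p,x_{p\pm1}\}$ of $M_j$, chosen with $x_{p\pm1}\notin\supp(M_i)$, or else follow an alternating path. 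This yields $M_r$ of level $k-1$, hence $m_r\prec m_i$, with $(m_r:m_i)=(x_{p\pm1})$ dividing the colon. The argument is the alternating-path argument of Lemma~\ref{cycle-lemma}, now allowed to terminate at a whisker.

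\emph{Case B: $k'=k$.} Here the whisker parts have equal size. If $S\ne T$, some $\{x_\ell,y_\ell\}\in S\setminus T$ meets $M_i$ at a cycle edge of $m_{k,b}$. We trace the alternating path in $C_n$ through the cycle edges of $m_{k,a}$ and $m_{k,b}$, starting at $x_\ell$. If it ends with an edge of $M_j$, or at a vertex carrying an $M_j$-whisker, it gives an even-connection or a free edge, and the first alternative follows by Theorem~\ref{Thm-even}. If it ends with an edge of $M_i$, swapping along it produces a same-level matching whose cycle part precedes $m_{k,b}$. That matching gives $r<i$ with $(m_r:m_i)$ a variable. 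If $S=T$, then $a<b$, and we apply the ordering property fixed in Notation~\ref{setup-whisker}, which comes from Lemma~\ref{cycle-lemma}, to $m_{k,a},m_{k,b}$. Multiplying by the common whisker factor does not change the colon. The inclusion $(I(C_n)^{[q-k]}:m_{k,b})\subseteq (I(G)^{[q]}:m_i)$ holds because the edges added avoid the whisker vertices $x_\ell$ with $\{x_\ell,y_\ell\}\in T$, as they lie in $C_n\setminus\supp(T)$. For the variable alternative, we take $r$ to be the generator $x_{M_{k,r',T}}$ with the same $T$. One must check that $T\in\mathcal S(m_{k,r'})$; if it fails, the offending vertex yields a free whisker, hence the first alternative again.

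Finally, the displayed consequence is formal. $\big((I(G)^{[q]},m_1,\dots,m_{i-1}):m_i\big)=(I(G)^{[q]}:m_i)+\sum_{j<i}(m_j:m_i)$. Each summand $(m_j:m_i)$ is absorbed either into $(I(G)^{[q]}:m_i)$ or into some $(m_r:m_i)=(u)$ with $r<i$, and the latter is itself a summand. The main obstacle is Case A/B bookkeeping: guaranteeing that the swapped matching $M_r$ really precedes $m_i$ in $\prec$, which forces swaps that do not raise the whisker level, and handling the wrap-around edge $\{x_1,x_n\}$ as in Case 2 of Lemma~\ref{cycle-lemma}.
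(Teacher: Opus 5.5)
There is a genuine gap. In both of your main cases, the alternating path you swap along is chosen locally, starting from an arbitrary whisker. Nothing in your argument guarantees that the resulting $m_r$ precedes $m_i$.

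\textbf{Case B ($k'=k$, $S\neq T$).} You claim that a path starting at a whisker $\{x_\ell,y_\ell\}\in S\setminus T$ and ending with an $M_i$-edge yields a same-level predecessor. This is false in two ways. If the path ends with an $M_i$ cycle edge at a vertex outside $\supp(M_j)$, the swap has level $k+1$. If it ends at an $M_i$-whisker, the level stays $k$ but the cycle part can go up.

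For a concrete instance, write edges as monomials, take $n\ge 8$, and let $M_j=\{x_1x_2,\,x_5y_5,\,x_6x_7\}$ and $M_i=\{x_2x_3,\,x_5x_6,\,x_7y_7\}$.
\begin{itemize}
\item Both have level $1$, and $m_j\prec m_i$ because $\{x_1,x_2\}<\{x_2,x_3\}$.
\item $\operatorname{lcm}(m_i,m_j)$ has no $4$-matching: $y_5$ and $y_7$ force their whiskers and leave $x_6$ isolated.
\item Your path from $y_5$ is $y_5x_5x_6x_7y_7$. Swapping along it gives $\{x_2x_3,x_5y_5,x_6x_7\}$, whose cycle part $\{x_2x_3,x_6x_7\}$ comes after $\{x_2x_3,x_5x_6\}$. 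In fact no predecessor of $m_i$ has colon $(y_5)$.
\item The lemma holds only via the other component $x_1x_2x_3$, giving $M_r=\{x_1x_2,x_5x_6,x_7y_7\}$ with $(m_r:m_i)=(x_1)$.
\end{itemize}

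\textbf{The case $k'<k$.} An arbitrary $\{x_p,y_p\}\in T\setminus S$ need not work. Take $n\ge 10$, $M_i=\{x_3x_4,x_5y_5,x_8y_8\}$ and $M_j=\{x_3y_3,x_4x_5,x_8x_9\}$. The path from $y_5$ is $y_5x_5x_4x_3y_3$. Its swap keeps level $2$ and enlarges the cycle part, so ``this yields $M_r$ of level $k-1$'' is wrong here; only $\{x_8,y_8\}$ works. Two further problems in this part:
\begin{itemize}
\item Your sub-case $x_p\notin\supp(M_j)$ is incoherent. There is then no cycle edge of $M_j$ at $x_p$; that whisker is an isolated $M_i$-edge of $M_i\triangle M_j$ and cannot be swapped at all.
\item As written, the case $k'<k$ with $S\subseteq T$ falls under neither Case A nor Case B.
\end{itemize}

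\textbf{What is missing is a global selection argument.}
\begin{itemize}
\item Decompose $M_i\triangle M_j$ into alternating paths. An alternating cycle would be all of $C_n$, forcing $m_i=m_j$.
\item If some path has more $M_j$-edges than $M_i$-edges, then $\operatorname{lcm}(m_i,m_j)$ contains a $q$-matching, and the first alternative holds.
\item Otherwise every path is balanced. Swapping any one of them changes $m_i$ by exactly one variable, namely the $M_j$-end vertex, which divides $m_j$.
\item Whiskers occur only at path ends. Hence $k'-k$ equals the number of balanced paths with an $M_j$-whisker and an $M_i$ cycle edge at their ends, minus the number with an $M_i$-whisker and an $M_j$ cycle edge at their ends.
\item So if $k'<k$, or if $k'=k$ and some swap raises the level, there is a level-lowering swap, which gives a predecessor.
\item Otherwise $k'=k$ and $a<b$. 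Take the path containing the smallest edge of the symmetric difference of the two cycle parts. That edge lies in $M_j$ precisely because $a<b$. Swapping that path therefore gives a same-level matching whose cycle part is lexicographically smaller.
\end{itemize}
This is the role played in the paper's proof by anchoring the alternating path at the smallest edge of $M_j\cap E(C_n)$, together with its counting remarks. Your whisker-anchored paths have no such control.

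A smaller point concerns your $S=T$ case. The ideal inclusion $(I(C_n)^{[q-k]}:m_{k,b})\subseteq(I(G)^{[q]}:m_i)$ that you assert is false in general, since its generators may involve $x_\ell$ with $\{x_\ell,y_\ell\}\in T$. What you need, and what holds, is only the inclusion for the specific colon $(m_{k,a}:m_{k,b})$, because $\supp(m_{k,a})$ avoids those vertices.
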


\begin{proof}
Let $m_j,m_i \in \mathcal{G}(I(G)^{[q-1]})$ with $j<i$, and let $M_j,M_i$ be the corresponding $(q-1)$-matchings of $G$. 
We argue according to the decomposition 
$\mathcal M^{q-1}=\bigcup_{k=0}^{q-1}\mathcal M^{q-1}_k$.

\medskip
\noindent\textit{Case 1.} Suppose $m_j \in \mathcal M^{q-1}_{q-1}$.

Then $M_j$ consists entirely of whisker edges. By the ordering in Notation~\ref{setup-whisker}, it follows that $m_i \in \mathcal M^{q-1}_{q-1}$ as well. Since $m_j \prec m_i$, there exists an edge $\{x_a,y_a\} \in M_j \setminus M_i$. Hence
$(m_j:m_i) \subseteq (x_a y_a).$
Moreover, $M_i \cup \{\{x_a,y_a\}\}$ is a matching of size $q$, and thus
$(x_a y_a) \subseteq (I(G)^{[q]}:m_i).$
It follows that $(m_j:m_i) \subseteq (I(G)^{[q]}:m_i)$.

\medskip
\noindent\textit{Case 2.} Suppose $m_j \notin \mathcal M^{q-1}_{q-1}$.

\medskip
\noindent\textit{Subcase 2.1.} Suppose $m_i \in \mathcal M^{q-1}_{q-1}$.

Then $M_i$ consists entirely of whiskers, while $M_j$ contains at least one edge of $C_n$. Suppose there exists an edge of $M_j\cap E(C_n)$ or an whisker of $M_j$ which is disjoint from $M_i$, then the argument is similar to \textit{Case~1.} Suppose this is not the case.
Let $\{x_a,x_b\}$ be the smallest edge of $M_j \cap E(C_n)$. Then $\{x_a,x_b\}$ is adjacent with some whisker of $M_i$. Suppose $\{x_a,x_b\}$ is adjacent to only $\{x_b,y_b\} \in M_i$, then 
$(m_j:m_i) \subseteq (x_a)$. Define
$M_r = \big(M_i \setminus \{\{x_b,y_b\}\}\big) \cup \{\{x_a,x_b\}\}.$
Then $m_r \prec m_i$ and 
$(m_r:m_i) = (x_a) \supseteq (m_j:m_i)$. 
If $\{x_a,x_b\}$ is adjacent to both $\{x_a,y_a\}$ and $\{x_b,y_b\}$ in $M_i$, then, since $|M_j|=|M_i|$, not all edges of $M_j$ can be adjacent to whiskers of $M_i$. Hence there exists an edge or whisker of $M_j$ that is disjoint from $M_i$, reducing to one of the previous cases.

\medskip
\noindent\textit{Subcase 2.2.} Suppose $m_i \notin \mathcal M^{q-1}_{q-1}$.

Then both $M_j$ and $M_i$ contain at least one edge of $C_n$. 
Observe that if $\{x_k,x_{k+1}\} \in M_j$ for $k=1,3,\ldots,n-2$ and $\{x_k,x_{k+1}\} \in M_i$ for $k=2,4,\ldots,n-1$, then $m_j=m_i$, contradicting $j<i$. Hence this situation does not occur.
Let $\{x_1,x_2\}$ be the smallest edge in $M_j \cap E(C_n)$. If $\{x_1,x_2\}$ is disjoint from $M_i$ then we are done. Otherwise there exists a maximal path $P:= z_p\dots z_1x_1\dots x_t.$ where alternatively edges are in $M_j \text{ and } M_i.$ Possibly $P$  can be $P:= x_1\dots x_t.$ Suppose $\{z_p,z_{p-1}\},\{x_{t-1},x_t\}\in M_j$ then there exists an even connection between $z_p \text{ and } x_t.$ So in this case  $(m_j:m_i) \subseteq (I(G)^{[q]}:m_i)$. Now suppose  $\{z_p,z_{p-1}\},\{x_{t-1},x_t\}\in M_i$. Since $|M_i|=|M_j|,$ either there exists an even connection or there exists $e\in M_j$ where e is disjoint from $M_i$. So in this case also we are done. Suppose, $\{z_p,z_{p-1}\}\in M_i \text{ and } \{x_{t-1},x_t\}\in M_j.$ Then $(m_j:m_i)\subseteq(x_t).$  First assume that $\{z_p,z_{p-1}\}\in M_i\cap E(C_n)$ and $\{x_{t-1},x_t\}$ is a whisker edge. Since $m_j\prec m_i$ then there exists a connected component induced by the vertices of $M_j \text{ and } M_i$, where number of edges in $M_j >$ number of edges in $M_i.$ So if there is a connected component containing single edge of $M_j$ which is disjoint from $M_i,$ then we are done, otherwise  there exists a maximal path such that $Q:= w_1\dots w_p,$ where edges are alternatively in $M_i$ and $M_j$ and either, $\{w_1,w_2\}$ is a whisker of $M_i$  and  $\{w_{p-1},w_p\}\in M_j\cap E(C_n)$ or $\{w_1,w_2\}\in E(C_n)\cap M_j$  and $\{w_{p-1},w_p\}$ is a whisker of $M_i.$ Then either $(m_j:m_i)\subseteq (w_1) \text{ or } (m_j:m_i)\subseteq (w_p)$. Then choose $M_r=M_i\setminus \{ \cup_{e\in Q\cap M_i} e\}\bigcup_{e\in Q\cap M_j}e.$ Then $m_r\prec m_i$ and $(m_r:m_i)\supseteq (m_j:m_i).$ In all other cases choose $M_r=M_i\setminus \{ \cup_{e\in P\cap M_i} e\}\bigcup_{e\in P\cap M_j}e.$ Then $m_r\prec m_i$ and $(m_r:m_i)=(x_t)\supseteq (m_j:m_i).$ Now suppose $\{z_p,z_{p-1}\}\in M_j \text{ and } \{x_{t-1},x_t\}\in M_i,$  then similrly we can proceed.

Now let $\{x_a,x_b\}$ be the smallest edge in $M_j\cap E(C_n)$, where $a\not=1.$
If $\{x_a,x_b\}$ is disjoint from $M_i$, then by similar argument as in \textit{Case~1}  we can proceed, otherwise there exists a maximal path $P:= z_1x_a\dots x_t$ where edges are alternatively in $M_j \text{ and } M_i.$ Possibly $P:= x_a\dots x_t$. Then by similar argument as in previous case we are done.
\end{proof}

The following lemma provides a key bound for paths, which will be used in the cycle case.

\begin{lemma}\label{tech-path}
Let $G = W(P_n)$ and let $2 \le q \le \left\lfloor \frac{n}{2} \right\rfloor + 1$. 
If $N$ is a $(q-1)$-matching of $P_n$, then
\[
\reg\big(I(G^{N})\big) \le 2 + \left\lfloor \frac{n - q}{2} \right\rfloor.
\]
\end{lemma}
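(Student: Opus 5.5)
The plan is to understand $G^N$ explicitly and then bound its regularity by induction on $n$. I will use the standard splitting inequality $\reg(I(H))\le\max\{\reg(I(H\setminus x)),\ \reg(I(H\setminus N_H[x]))+1\}$, together with the three facts of the last theorem of Section~\ref{pre}: additivity over disjoint unions, monotonicity under induced subgraphs, and invariance under adding a variable.

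\emph{Step 1: describe $G^N$.} Write the path as $x_1\cdots x_n$ with whiskers $\{x_i,y_i\}$. Since $N\subseteq E(P_n)$, the vertex set of $G^N$ consists of the $n-2(q-1)$ path vertices not covered by $N$, together with all $y_1,\dots,y_n$. Split $N$ into maximal \emph{runs}: a run is a set of consecutive edges $\{x_a,x_{a+1}\},\{x_{a+2},x_{a+3}\},\dots,\{x_{b-1},x_b\}$ of $N$ in which each pair of neighbouring edges is joined by a path edge. An even-connection can only travel along a run, alternating an $N$-edge with a path edge. Consequently the new edges of $G^N$ are exactly the edges among the set $K$ consisting of the whisker vertices $y_a,\dots,y_b$ of the run and the two outer neighbours $x_{a-1}$, $x_{b+1}$ (when these exist), subject to the parity pattern forced by the alternation; for instance $x_{a-1}$ is joined to $y_{a+1},y_{a+3},\dots$ and to $x_{b+1}$. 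Different runs do not interact. By Proposition~\ref{disjoint-even} and Proposition~\ref{del-even}, deleting a vertex outside $\supp(N)$ commutes with taking $G^N$, and a disconnected path piece can be handled componentwise. This is what makes the induction possible.

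\emph{Step 2: induct.} I will induct on $n$, and for fixed $n$ on $q$. The base case $q=1$, $N=\emptyset$, gives $G^N=W(P_n)$, and Theorem~\ref{reg-known} yields $2+\lfloor (n-1)/2\rfloor$, which is the claimed bound.

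For $q\ge 2$, first suppose the endpoint $x_1$ is not covered by $N$. I apply the splitting inequality with $x=x_1$. Then $G^N\setminus N[x_1]$ is the graph attached to $W(P_{n-2})$ with the same matching, plus possibly isolated vertices; this gives $1+2+\lfloor(n-2-q)/2\rfloor$, which is the desired bound. On the other side, $G^N\setminus x_1$ has $y_1$ isolated, so it reduces to the $P_{n-1}$ case with the same $q$. That case gives only $2+\lfloor(n-1-q)/2\rfloor$, which is fine.

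Now suppose $\{x_1,x_2\}\in N$, so $x_1$ lies in a run at the end of the path. I will instead split at $y_1$. Its closed neighbourhood in $G^N$ is $y_1$ together with the run vertices of the appropriate parity and the outer neighbour $x_{b+1}$. Deleting this closed neighbourhood breaks off the run. The remainder is $(W(P_{n'}))^{N'}$ with $n'\le n-b-1$ and $|N'| = q-1-(b/2)$; in the induced count this gives at most $\lfloor (n-q)/2\rfloor+1$. Deleting only $y_1$ leaves a graph in which the run behaves like the same run with one fewer whisker. Here I expect to need a secondary induction on the length of the run, ending when the run is reduced to a single edge $\{x_1,x_2\}$. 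In that final situation, after deleting $y_1$, the vertex $y_2$ is a leaf attached to $x_3$, and one more splitting at $x_3$ finishes the argument. Throughout, the counts are of the form $n-2\cdot(\text{edges removed})$ against $q$ decreasing by the same number of edges, so the quantity $\lfloor (n-q)/2\rfloor$ decreases by at least the number of edges removed.

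\emph{Main obstacle.} The hard step is the precise bookkeeping inside a run: identifying exactly which $y$'s are adjacent to $x_{a-1}$ and $x_{b+1}$, which is a parity issue, and checking that the secondary induction on run length really lands back in a graph of the form $(W(P_m))^{N'}$, or in an induced subgraph of one. If that identification fails, my fallback is a different bound. I would cover the edges of each run-clique $K$ together with its pendant whisker edges by a single co-chordal subgraph, and bound $\reg$ by the co-chordal cover number plus one. Each run then costs $1$, while the $n-2(q-1)$ free path vertices contribute as in the whiskered-path count, and comparing the two totals should again give $2+\lfloor (n-q)/2\rfloor$.
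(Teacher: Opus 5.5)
\textbf{Verdict: there is a genuine gap.} The branch $G^N\setminus y_1$ is never actually proved, and the step you propose for it rests on a false adjacency claim.

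\textbf{What agrees with the paper.} Your overall scheme is the paper's: induction on $n$ through the splitting inequality, splitting at $y_1$ when $\{x_1,x_2\}\in N$. Your treatment of the branch $G^N\setminus N_{G^N}[y_1]$ coincides with the paper's Case~1.

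\textbf{The uncovered-$x_1$ case.} Splitting at $x_1$ when it is uncovered is also fine; the paper instead splits at the vertex $x_{i-1}$ preceding the leftmost run. Your description of this branch is wrong when $\{x_2,x_3\}\in N$, though. Then $x_2\notin V(G^N)$, and $N_{G^N}[x_1]=\{x_1,y_1,y_3,\dots,y_b,x_{b+1}\}$, where $N'=\{\{x_2,x_3\},\dots,\{x_{b-1},x_b\}\}$ is the run. So the remainder is $W(P_{n-b-1})^{N\setminus N'}$ plus isolated vertices, not a graph on $W(P_{n-2})$ with the same matching. The resulting bound is still sufficient, so this slip is harmless.

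\textbf{The gap: the branch $G^N\setminus y_1$.} You defer it to an unspecified secondary induction on the run length, whose final step is false. In $G^N$ the vertex $y_2$ is not adjacent to $x_3$. Its only neighbour in $G$ is $x_2\in\supp(N)$. An even-connection starting at $y_2$ must begin $y_2,x_2,x_1$, and can then only stop at $y_1$, because $\{x_1,x_2\}$ cannot be used twice. Hence $N_{G^N}(y_2)=\{y_1\}$; it is $y_1$ that is joined to the vertex $x_{2q'+1}$ following the run.

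The same reasoning shows that every even-connection using $\{x_1,x_2\}$ has $y_1$ as an endpoint. Consequently, deleting $y_1$ isolates $y_2$. What remains is exactly $W(P_{n-2})^{N\setminus\{\{x_1,x_2\}\}}$ on $x_3,\dots,x_n$, for every run length. The induction hypothesis on $n$, with $q-1$ in place of $q$ (Theorem~\ref{reg-known} when $q=2$), then gives $2+\lfloor (n-q-1)/2\rfloor$ at once. This is the missing observation; with it no secondary induction is needed.

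\textbf{The co-chordal fallback does not fill the gap.}
\begin{itemize}
\item $K$ induces a bipartite chain graph, not a clique.
\item The accounting you describe overshoots. For $n=8$ and $N=\{\{x_2,x_3\},\{x_6,x_7\}\}$, the target $\reg\le 4$ requires a cover by $3$ co-chordal subgraphs. One subgraph per run plus the whiskered-path count for the four free vertices already uses at least $4$.
\item A sharper argument, absorbing the outer whiskers and path edges into the run's chain graph, would be needed, and you have not supplied it.
\end{itemize}
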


\begin{proof}
Let $P_n$ be the path on vertices $\{x_1,\ldots,x_n\}$, and let 
$V(G)=\{x_1,\ldots,x_n,y_1,\ldots,y_n\}$. 
We proceed by induction on $n$.
If $n=2$, then $q=2$ and $N=\{\{x_1,x_2\}\}$. In this case, $G^N \cong P_2$. 
Hence, by \cite[Theorem~1]{froberg}, we have
$\reg(I(G^N)) = 2 = 2 + \left\lfloor \frac{n-2}{2}\right\rfloor.$

Suppose $n>2$ and let $N$ be a $(q-1)$-matching of $P_n$. 
Consider the subgraph of $P_n$ induced by the vertices of $N$. 
Each connected component of this subgraph is a path. 
Let $N'$ be a component of size $q'$, where $1 \le q' \le q-1$, and let $i$ be the smallest index such that $\{x_i,x_{i+1}\} \in N'$.
 Then
\[
N'=\big\{\{x_i,x_{i+1}\},\{x_{i+2},x_{i+3}\},\ldots,\{x_{i+2q'-2},x_{i+2q'-1}\}\big\}.
\]

\medskip
\noindent\textit{Case 1. $i=1$.} Then
$N'=\big\{\{x_1,x_2\},\{x_3,x_4\},\ldots,\{x_{2q'-1},x_{2q'}\}\big\}.$
We have
\[
G^N \setminus y_1 \cong W(P_\ell)^{\,N\setminus \{\{x_1,x_2\}\}},
~
G^N \setminus N_{G^N}[y_1] \cong W(P_{\ell'})^{\,N \setminus N'}
\]
up to isolated vertices,
where $P_\ell$ is the path on vertices $\{x_3,\ldots,x_n\}$ and $P_{\ell'}$ is the path on vertices $\{x_{2q'+2},\ldots,x_n\}$.
By the induction hypothesis on $n$, we obtain
\[
\reg\big(I(G^N \setminus y_1)\big)
\le 2+\left\lfloor \frac{(n-2)-(q-1)}{2} \right\rfloor
= 2+\left\lfloor \frac{n-q-1}{2} \right\rfloor,
\]
and
\[
\reg\big(I(G^N \setminus N_{G^N}[y_1])\big)+1
\le 2+\left\lfloor \frac{(n-(2q'+1))-(q-q')}{2} \right\rfloor +1
\le 2+\left\lfloor \frac{n-q}{2} \right\rfloor.
\]
Therefore, by \cite[Lemma~3.2]{huneke},
\[
\reg(I(G^N))
\le \max\Big\{
\reg\big(I(G^N \setminus y_1)\big),\,
\reg\big(I(G^N \setminus N_{G^N}[y_1])\big)+1
\Big\}
\le 2+\left\lfloor \frac{n-q}{2} \right\rfloor.
\]

\medskip
\noindent\textit{Case 2. $i \neq 1$.}
Then
$N'=\big\{\{x_i,x_{i+1}\},\{x_{i+2},x_{i+3}\},\ldots,\{x_{i+2q'-2},x_{i+2q'-1}\}\big\}.$
We have
\[
G^N \setminus x_{i-1} \cong W(P_{i-2}) \coprod W(P_{n-i+1})^{\,N},
~
G^N \setminus N_{G^N}[x_{i-1}] \cong W(P_{i-3}) \coprod W(P_{n-i-2q'})^{\,N \setminus N'}
\]
up to isolated vertices. 
By the induction hypothesis on $n$, we obtain
\[
\reg\big(I(G^N \setminus x_{i-1})\big)
\le \reg(I(W(P_{i-2}))) + \reg(I(W(P_{n-i+1})^{\,N})) - 1,
\]
and hence
\[
\reg\big(I(G^N \setminus x_{i-1})\big)
\le \left(2+\left\lfloor \frac{i-3}{2} \right\rfloor\right)
+ \left(2+\left\lfloor \frac{n-i+1-q}{2} \right\rfloor\right) -1
\le 2+\left\lfloor \frac{n-q}{2} \right\rfloor.
\]
Similarly,
\[
\reg\big(I(G^N \setminus N_{G^N}[x_{i-1}])\big)+1
\le \reg(I(W(P_{i-3}))) + \reg(I(W(P_{n-i-2q')})^{\,N \setminus N'})),
\]
and hence
\[
\reg\big(I(G^N \setminus N_{G^N}[x_{i-1}])\big)+1
\le \left(2+\left\lfloor \frac{i-4}{2} \right\rfloor\right)
+ \left(2+\left\lfloor \frac{n-i-2q'-(q-q')}{2} \right\rfloor\right)
\le 2+\left\lfloor \frac{n-q}{2} \right\rfloor.
\]
Therefore, by \cite[Lemma~3.2]{huneke},
\[
\reg(I(G^N))
\le \max\Big\{
\reg\big(I(G^N \setminus x_{i-1})\big),\,
\reg\big(I(G^N \setminus N_{G^N}[x_{i-1}])\big)+1
\Big\}
\le 2+\left\lfloor \frac{n-q}{2} \right\rfloor.
\]
\end{proof}

We record the following simple observation for later use.

\begin{observation}\label{path-ind}
Let $P_n$ be a path on the vertex set $\{x_1,\ldots,x_n\}$, and let $G = W(P_n)$ with 
$V(G) = \{x_1,\ldots,x_n, y_1,\ldots,y_n\}$. 
Let $M$ be a perfect matching of $P_n$. Then $n$ is even, and $G^M$ is a bipartite graph with bipartition 
$\{y_1,y_3,\ldots,y_{n-1}\}$ and $\{y_2,y_4,\ldots,y_n\}$. 
Moreover,
$E(G^M)
= \bigl\{\{y_i,y_j\} \mid i \text{ odd},\, j \text{ even},\, i<j\bigr\}.$
In particular, $G^M$ is co-chordal, and hence, by \cite[Theorem~1]{froberg},
$\reg(I(G^M)) = 2.$

Let $G = W(C_n)$ and let $M$ be a perfect matching of $C_n$. Then $n$ is even, and $G^M$ is a complete bipartite graph.
In particular, $G^M$ is co-chordal, and hence
$\reg(I(G^M)) = 2.$
\end{observation}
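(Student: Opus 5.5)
The plan is to compute the graph $G^M$ explicitly from the definition of even-connections, read off its structure, and then apply Fr\"oberg's theorem \cite[Theorem~1]{froberg}. If $P_n$ (or $C_n$) has a perfect matching $M$, then $n$ is even. Moreover $\supp(M)=\{x_1,\dots,x_n\}$, so $V(G^M)=\{y_1,\dots,y_n\}$. The $y_i$ are pairwise non-adjacent in $G$, so every edge of $G^M$ comes from an even-connection $y_i\sim_M y_j$.

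\emph{Path case.} Here $M=\{\{x_1,x_2\},\{x_3,x_4\},\dots,\{x_{n-1},x_n\}\}$, and I would analyse an even-connection $p_0,\dots,p_{2r+1}$ from $y_i$ to $y_j$ directly.
\begin{enumerate}
\item Since $y_i$ has the single neighbour $x_i$, we get $p_1=x_i$ and $p_2$ is the $M$-partner of $x_i$.
\item Similarly $p_{2r}=x_j$.
\item Every intermediate vertex $p_\ell$ with $0<\ell<2r+1$ lies in $\supp(M)$, because $p_{2k+1}p_{2k+2}$ is an $M$-edge and $p_{2k},p_{2k+1}$ are its neighbours along the chain. Hence $p_1,\dots,p_{2r}$ are $x$'s.
\item Going backwards is impossible. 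From $p_{2k}$ the only $G$-neighbours among the $x$'s are its $M$-partner and at most one other vertex. Choosing the $M$-partner as $p_{2k+1}$ would force the same $M$-edge to be reused, which violates condition~(4) since $M$ is a matching.
\end{enumerate}
So for $i$ odd the walk is forced to be $y_i,x_i,x_{i+1},x_{i+2},\dots$, with the $M$-edges $\{x_{i+2k},x_{i+2k+1}\}$. It can stop at $p_{2r}=x_j$ exactly when $j=i+2r-1$, that is, when $j$ is even and $j>i$. Starting at an even $i$ the walk is forced leftwards, which yields the same pairs. This gives $E(G^M)=\{\{y_i,y_j\}\mid i \text{ odd},\ j \text{ even},\ i<j\}$.

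The bipartition can also be seen more conceptually. $W(P_n)$ is bipartite with $y_i$ coloured opposite to $x_i$, and an even-connection has odd length $2r+1$. Hence $y_i$ and $y_j$ must receive opposite colours, which means $i$ and $j$ have opposite parity.

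For co-chordality I would observe that $N_{G^M}(y_i)=\{y_j\mid j \text{ even},\ j>i\}$ for odd $i$, and these sets are nested (decreasing in $i$). So $G^M$ is a bipartite difference (chain) graph. In the complement, the odd $y$'s form a clique, the even $y$'s form a clique, and the cross edges have nested neighbourhoods. A perfect elimination ordering of $\overline{G^M}$ is obtained by eliminating the vertices in order of their neighbourhoods; equivalently, $G^M$ has no induced $2K_2$ and is bipartite, so its complement has no induced $C_4$ and no long induced cycle. Since $G^M$ has at least one edge ($y_1y_n$), \cite[Theorem~1]{froberg} gives $\reg(I(G^M))=2$.

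\emph{Cycle case.} Up to rotation, $M=\{\{x_1,x_2\},\dots,\{x_{n-1},x_n\}\}$. The same forced-walk argument applies, except that the walk can now go around the cycle. Starting from $y_i$ with $i$ odd, it reaches $x_j$ as $p_{2r}$ for every even $j$, wrapping past $x_n x_1$, and each $M$-edge is used at most once. So all odd/even pairs are adjacent. Since $n$ is even, $W(C_n)$ is bipartite, and the parity argument above shows there are no same-parity edges. Hence $G^M\cong K_{n/2,n/2}$. Its complement is a disjoint union of two cliques, which is chordal, so again $\reg(I(G^M))=2$.

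The only step needing care is the forced-walk argument: ruling out walks that backtrack by reusing $M$-edges, and in the cycle case checking that wrapping around never reuses an edge before reaching every even-index vertex.
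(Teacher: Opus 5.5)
Your proposal is correct and follows the same route as the paper. The paper gives no separate argument: it asserts the explicit form of $G^M$ (odd--even pairs $y_iy_j$ with $i<j$ for the path, $K_{n/2,n/2}$ for the cycle) and then appeals to co-chordality and Fr\"oberg's theorem. Your forced-walk analysis (backtracking would reuse an $M$-edge, violating condition (4)) and your argument that a bipartite $2K_2$-free graph has chordal complement (the complement is covered by two cliques, so it has no induced cycle longer than $4$) supply exactly the details the paper leaves implicit. The one small slip is the justification in step 3: $p_1,\dots,p_{2r}$ lie in $\supp(M)$ simply because the pairs $(p_{2k+1},p_{2k+2})$, $0\le k\le r-1$, cover the indices $1,\dots,2r$.
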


The following lemma establishes the corresponding bound for cycles.

\begin{lemma}\label{tech-cycle}
Let $G = W(C_n)$ and let $2 \le q \le \left\lfloor \frac{n}{2} \right\rfloor + 1$. 
If $N$ is a $(q-1)$-matching of $C_n$, then
\[
\reg\big(I(G^N)\big) \le 2 + \left\lfloor \frac{n - q - 1}{2} \right\rfloor.
\]
\end{lemma}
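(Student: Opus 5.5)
The idea is to reduce to the path case, Lemma~\ref{tech-path}, by a single vertex splitting. Write $V(C_n)=\{x_1,\dots,x_n\}$ and $V(G)=\{x_1,\dots,x_n,y_1,\dots,y_n\}$. Let $N$ be a $(q-1)$-matching of $C_n$. If $N$ is a perfect matching of $C_n$, then $n$ is even and $q-1=n/2$. Observation~\ref{path-ind} gives $\reg(I(G^N))=2$, and since $\lfloor (n-q-1)/2\rfloor\ge 0$ in this range (with the Convention), the bound holds.

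Otherwise some vertex of $C_n$ is uncovered by $N$; relabel so that $x_n\notin\supp(N)$. I would apply \cite[Lemma~3.2]{huneke} at the vertex $x_n$:
\[
\reg(I(G^N))\le\max\bigl\{\reg(I(G^N\setminus x_n)),\ \reg(I(G^N\setminus N_{G^N}[x_n]))+1\bigr\}.
\]

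For the deletion term, Proposition~\ref{del-even} gives $G^N\setminus x_n=(G\setminus x_n)^N$. The graph $G\setminus x_n$ is $W(P_{n-1})$ on $x_1,\dots,x_{n-1}$ together with the isolated vertex $y_n$, and $N$ is a $(q-1)$-matching of this path. Lemma~\ref{tech-path}, with $n-1$ in place of $n$, then gives
\[
\reg(I(G^N\setminus x_n))\le 2+\left\lfloor\frac{n-1-q}{2}\right\rfloor,
\]
which is exactly the target. I still need to check the hypothesis $q\le\lfloor (n-1)/2\rfloor+1$. This holds because $N$ is a $(q-1)$-matching of $P_{n-1}$, and so $q-1\le\lfloor (n-1)/2\rfloor$.

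For the link term, the neighbourhood $N_{G^N}[x_n]$ contains $x_{n-1}$, $x_1$ and $y_n$. It also contains every vertex even-connected to $x_n$ through the alternating runs of $N$ starting at $x_{n-1}$ or at $x_1$. Suppose the run of $N$ on the $x_1$ side is $\{x_1,x_2\},\dots,\{x_{2a-1},x_{2a}\}$, so that it covers $a$ edges. Then $x_n$ is even-connected to $x_{2a+1}$, and symmetrically on the other side. Deleting the closed neighbourhood therefore removes these vertices together with the $a+b$ edges of the adjacent runs, where $b$ is the length of the run on the $x_{n-1}$ side. What remains is, up to isolated vertices, $W(P_\ell)^{N''}$ on a path of length roughly $\ell=n-3-2a-2b$ (and shorter if the endpoints $x_{2a+1},x_{n-2b-2}$ are removed), with $N''$ consisting of the remaining $q-1-a-b$ edges. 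Here Proposition~\ref{del-even} and Proposition~\ref{disjoint-even} handle the components.

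I would bound this residual graph by Lemma~\ref{tech-path}, or by Theorem~\ref{reg-known} when $N''=\emptyset$. This gives
\[
\reg\le 2+\left\lfloor\frac{\ell-(q-a-b)}{2}\right\rfloor,
\]
and adding $1$ yields at most $2+\lfloor (n-q-1)/2\rfloor$ once $\ell\le n-3-a-b$ and the runs are accounted for.

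The main obstacle is exactly this link term: describing $N_{G^N}[x_n]$ precisely and computing the residual path length so that the numerics close with the $-1$. The subtle points are the cases where an even-connection wraps around the whole cycle, and where the residual path ends in a whisker rather than a cycle vertex. I would handle these by choosing $x_n$ to be an uncovered vertex adjacent to a covered one, and checking the floor inequality separately according to the parity of $n-q$.
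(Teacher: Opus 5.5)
Your proposal is correct and follows the paper's proof. You split via \cite[Lemma~3.2]{huneke} at an uncovered cycle vertex adjacent to a covered one. The deletion $(G\setminus x_n)^N$ is bounded by Lemma~\ref{tech-path} on $W(P_{n-1})^N$. For the link you delete the alternating runs on both sides, so your cases $b=0$ and $b\ge 1$ are the paper's Subcases~2.1 and~2.2, and you bound the residual path by Lemma~\ref{tech-path} or Theorem~\ref{reg-known}; the perfect-matching case goes to Observation~\ref{path-ind}. Your exact residual length $\ell=n-3-2a-2b$ is one smaller than the paper's $t=n-2q'-2$, and it closes the inequality even when $a=b=0$ (an empty residual, as in the wrap-around case, is trivial).
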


\begin{proof}
Let $C_n$ be the cycle on vertices $\{x_1,\ldots,x_n\}$ with edges $\{x_i,x_{i+1}\}$ (indices taken modulo $n$), and let 
$V(G)=\{x_1,\ldots,x_n,y_1,\ldots,y_n\}$, 
where $\{x_i,y_i\}$ are the whisker edges.
If $n=3$, then necessarily $|N|=1$. In this case, $G^N$ is a triangle with a whisker attached to one vertex, and hence is co-chordal. 
By \cite[Theorem~1]{froberg}, we have
$\reg(I(G^N)) = 2 = 2+\left\lfloor \frac{n-3}{2}\right\rfloor.$
Assume $n \ge 4$ and let $N$ be a $(q-1)$-matching of $C_n$. 
Consider the subgraph of $C_n$ induced by the vertices of $N$. Each connected component of this subgraph is either a path or a cycle.

\medskip
\noindent\emph{Case 1. The induced subgraph is a cycle.}
By Observation~\ref{path-ind} and \cite[Theorem~1]{froberg}, we have
$\reg(I(G^N)) = 2 \le 2+\left\lfloor \frac{n-q-1}{2}\right\rfloor.$

\medskip
\noindent\emph{Case 2. Every connected component of the induced subgraph on the vertices of $N$ is a path.}

Let $N'$ be a component of size $q'$, where $1 \le q' \le q-1$, and let $i$ be the smallest index such that $\{x_i,x_{i+1}\} \in N'$. Then
$N'=\big\{\{x_i,x_{i+1}\},\{x_{i+2},x_{i+3}\},\ldots,\{x_{i+2q'-2},x_{i+2q'-1}\}\big\}.$
Thus, $|N'| = q'$. Without loss of generality, we may assume that $i=2$. Then
$G^N\setminus x_1 \cong W(P_{n-1})^{\,N}.$
Therefore, by Lemma~\ref{tech-path},
\[
\reg\big(I(G^N \setminus x_1)\big)
\le 2+\left\lfloor \frac{n-1-q}{2}\right\rfloor.
\]

\medskip
\noindent\emph{Subcase 2.1.} Suppose $\{x_{n-1},x_n\} \notin N$. Then
$G^N \setminus N_{G^N}[x_1] \cong W(P_t)^{\,N_1}$
up to isolated vertices,
where $P_t$ is the path on vertices $\{x_{2q'+2},\ldots,x_{n-1}\}$,
$t = n - 2q' - 2$
and
$N_1 = N \setminus N'.$
Thus, $|N_1| = q - 1 - q'$. 
By Lemma~\ref{tech-path}, we obtain
\[
\reg\big(I(G^N \setminus N_{G^N}[x_1])\big)+1
\le 3+\left\lfloor \frac{n-2q'-2-(q-q')}{2} \right\rfloor.
\]
Since $q' \ge 1$, it follows that
$\reg\big(I(G^N \setminus N_{G^N}[x_1])\big)+1
\le 2+\left\lfloor \frac{n-q-1}{2} \right\rfloor. $

\medskip
\noindent\emph{Subcase 2.2.} Suppose $\{x_{n-1},x_n\} \in N$. 
Let
$N'' = \big\{\{x_{n}, x_{n-1}\}, \{x_{n-2}, x_{n-3}\}, \ldots, \{x_{n-(2q''-2)}, x_{n-(2q''-1)}\}\big\}$
be a connected component of the induced subgraph on the vertices of $N$. 
Then $|N''| = q''$. 
Observe that
$G^N \setminus N_{G^N}[x_1] \cong W(P_t)^{\,N_1}$
up to isolated vertices,
where
$N_1 = N \setminus (N' \cup N''),$
$t = n - 2(q' + q'') - 3,$
and
$|N_1| = (q - 1) - q' - q''.$
By Lemma~\ref{tech-path}, we obtain
\[
\reg\big(I(G^N \setminus N_{G^N}[x_1])\big)+1
\le 3 + \left\lfloor \frac{n - 2(q' + q'') - 3 - (q - q' - q'')}{2} \right\rfloor.
\]
Since $q' + q'' \ge 1$, it follows that
$\reg\big(I(G^N \setminus N_{G^N}[x_1])\big)+1
\le 2 + \left\lfloor \frac{n - q - 1}{2} \right\rfloor.$

In both subcases, we have
$\reg\big(I(G^N \setminus N_{G^N}[x_1])\big)+1
\le 2+\left\lfloor \frac{n-q-1}{2} \right\rfloor.$
Therefore, by \cite[Lemma~3.2]{huneke},
\[
\reg(I(G^N))
\le \max\Big\{
\reg\big(I(G^N\setminus x_{1})\big),\,
\reg\big(I(G^N\setminus N_{G^N}[x_{1}])\big)+1
\Big\}
\le 2+\left\lfloor \frac{n-q-1}{2}\right\rfloor.
\]
\end{proof}

\section{Main Result}\label{m}

In this section, we prove the main result of this paper. 
We begin by analyzing the structure of the graph $G^M$ associated to a minimal generator $m_\ell \in \mathcal{M}_k^{q-1}$ with $k \neq 0$, and introduce suitable subsets of vertices that will play a key role in bounding the regularity.

\begin{discussion}\label{case-as}
Let $G = W(C_n)$ and assume the notation of Notation~\ref{setup-whisker}. 
Suppose that $m_{\ell} \in \mathcal{M}_k^{q-1}$ with $k \neq 0$, and let
$M = B \cup \big\{\{x_{w_1},y_{w_1}\},\ldots,\{x_{w_k},y_{w_k}\}\big\}$
be the corresponding $(q-1)$-matching of $G$, where $B \subseteq E(C_n)$ is a matching.  
Then
$G \setminus \{x_{w_1},\ldots,x_{w_k}\}
= \coprod_{i=1}^{\gamma'} W(H_i),$
where each $H_i$ is a path. By Proposition~\ref{disjoint-even},
\begin{equation}\label{eq:GM-decomp}
G^M = G^B \setminus \{x_{w_1},\ldots,x_{w_k}\}
= \coprod_{i=1}^{\gamma'} W(H_i)^{B_i},
\end{equation}
where $B_i := B \cap E(H_i)$.
For each $1 \le i \le \gamma'$, set $n_i := |V(H_i)|$ and $b_i := |B_i|$, and write
$V(H_i) = \{x_{i,1}, x_{i,2}, \ldots, x_{i,n_i}\},$
so that $H_i$ is a path with edges $\{x_{i,j},x_{i,j+1}\} \in E(C_n)$ for $1 \le j \le n_i-1$. In particular, the endpoints of $H_i$ are $x_{i,1}$ and $x_{i,n_i}$.
Since each $H_i$ is a connected component of $C_n \setminus \{x_{w_1},\ldots,x_{w_k}\}$, there exist vertices $x_{w_{i'}}, x_{w_{i''}} \in \{x_{w_1},\ldots,x_{w_k}\}$ such that
$\{x_{w_{i'}},x_{i,1}\}$, $\{x_{i,n_i},x_{w_{i''}}\} \in E(C_n),$
where possibly $x_{w_{i'}} = x_{w_{i''}}$ if $\gamma'=1$. Without loss of generality, we assume $x_{w_{i'}} < x_{w_{i''}}$ with respect to the ordering in Notation~\ref{setup-whisker}.

After reindexing the decomposition in \eqref{eq:GM-decomp}, we may assume that
$0 \le \gamma_1' \le \gamma_2' \le \gamma'$
such that
\begin{enumerate}
\item $\supp(B_i) \cap \{x_{i,1},x_{i,n_i}\} = \emptyset$
for $1 \le i \le \gamma_1'$;

\item $\supp(B_i) \cap \{x_{i,1},x_{i,n_i}\} \neq \emptyset$
for $\gamma_1'+1 \le i \le \gamma'$, and
\begin{enumerate}
\item $x_{i,1} \in \supp(B_i)$
for $\gamma_1'+1 \le i \le \gamma_2'$;

\item $x_{i,1} \notin \supp(B_i)$
for $\gamma_2'+1 \le i \le \gamma'$.
\end{enumerate}
\end{enumerate}

In particular, for $\gamma_2'+1 \le i \le \gamma'$, one has $x_{i,n_i} \in \supp(B_i)$.

We now define subsets of $V(G)$ which will be used in the proof of the main theorem.
\begin{enumerate}
    \item For $1 \le i \le \gamma_1'$, set
$S_{H_i} = \{x_{i,1}, x_{i,n_i}\}.$

\item For $\gamma_1'+1 \le i \le \gamma_2'$, consider a longest alternating path in $H_i$ starting at $x_{i,1}$, say
$x_{i,1} x_{i,2} \cdots x_{i,2j_i},$
such that
$\{x_{i,2t-1}, x_{i,2t}\} \in B_i $
and
$\{x_{i,2t}, x_{i,2t+1}\} \notin B_i$
for  $1 \le t \le j_i-1,$
and the last edge $\{x_{i,2j_i-1}, x_{i,2j_i}\}$ belongs to $B_i$. We define
$S_{H_i} = \{y_{i,2}, y_{i,4}, \ldots, y_{i,2j_i}, x_{i,2j_i+1}\}.$

\item For $\gamma_2'+1 \le i \le \gamma'$, consider a longest alternating path in $H_i$ starting at $x_{i,n_i}$, say
$$x_{i,n_i} x_{i,n_i-1} \cdots x_{i,n_i-(2f_i-1)},$$
such that the edges alternate between belonging to $B_i$ and not belonging to $B_i$, and the last edge belongs to $B_i$. We define
$S_{H_i} = \{x_{i,1}, x_{i,n_i-2f_i}\}.$
\end{enumerate}
\end{discussion}

The following lemma provides a key tool for controlling the colon ideals that arise in our arguments.
\begin{lemma}\label{main-tech-lemma}
Assume the hypothesis and notation as in Discussion~\ref{case-as}. If $z \in S_{H_i}$ for some $1 \le i \le \gamma'$, then there exists a monomial $m_r \in \mathcal{G}(I(G)^{[q-1]})$ such that
$(m_r : m_\ell) = (z)$
and
$m_r \prec m_\ell.$
\end{lemma}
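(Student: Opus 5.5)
The idea is to build $m_r$ from $m_\ell$ by a single local exchange along $H_i$ (and possibly one whisker edge of $M$ adjacent to $H_i$). The exchange is designed so that the resulting $(q-1)$-matching $M_r$ satisfies
$m_r = z\, m_\ell / w$ for exactly one variable $w \in \supp(m_\ell)$ with $w \ne z$. This gives $(m_r:m_\ell)=(z)$ at once, since $z\notin\supp(m_\ell)$ in every case.

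For the comparison $m_r\prec m_\ell$ I will use one observation. A variable $y_s$ can only be covered by the whisker $\{x_s,y_s\}$. Hence the number $k$ of whiskers is determined by the monomial, namely as the number of $y$'s dividing it. So the block $\mathcal M^{q-1}_k$ containing a generator is well defined, and any exchange that removes one whisker puts $m_r$ in $\mathcal M^{q-1}_{k-1}\prec\mathcal M^{q-1}_k$. I will try to make every exchange remove a whisker, replacing the whisker $\{x_{w_{i'}},y_{w_{i'}}\}$ (or $\{x_{w_{i''}},y_{w_{i''}}\}$) by the cycle edge $\{x_{w_{i'}},x_{i,1}\}$ (or $\{x_{i,n_i},x_{w_{i''}}\}$). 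When this is impossible, I will compare within $\mathcal M^{q-1}_k$ through the ordering of the cycle parts $m_{k,a}$.

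The cases are as follows.

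\emph{Type 1} ($i\le\gamma_1'$). For $z=x_{i,1}$, take $M_r=(M\setminus\{\{x_{w_{i'}},y_{w_{i'}}\}\})\cup\{\{x_{w_{i'}},x_{i,1}\}\}$. This is a matching because $x_{i,1}\notin\supp(B_i)$, and $w=y_{w_{i'}}$. For $z=x_{i,n_i}$ I do the same with $x_{w_{i''}}$.

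\emph{Type 3} ($i>\gamma_2'$). For $z=x_{i,1}$, which is uncovered by definition of the case, I use the same swap as in Type 1. For $z=x_{i,n_i-2f_i}$, I shift the whole alternating path: delete the whisker at $x_{w_{i''}}$ and the $B_i$-edges of the path, and insert $\{x_{i,n_i},x_{w_{i''}}\}$ together with the edges $\{x_{i,n_i-2s+1},x_{i,n_i-2s}\}$ for $1\le s\le f_i$. Maximality of the alternating path guarantees that $x_{i,n_i-2f_i}$ is not covered. The new matching has the same size, one fewer whisker, and differs from $m_\ell$ only by trading $y_{w_{i''}}$ for $z$.

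\emph{Type 2} ($\gamma_1'<i\le\gamma_2'$). For $z=x_{i,2j_i+1}$, the analogous full shift from the $x_{w_{i'}}$ side drops a whisker, which gives the claim. For $z=y_{i,2t}$, I shift only the first $t$ edges of the path and then insert the whisker $\{x_{i,2t},y_{i,2t}\}$:
\[
M_r=\Big(M\setminus\big\{\{x_{w_{i'}},y_{w_{i'}}\},\{x_{i,1},x_{i,2}\},\ldots,\{x_{i,2t-1},x_{i,2t}\}\big\}\Big)\cup\big\{\{x_{w_{i'}},x_{i,1}\},\{x_{i,2},x_{i,3}\},\ldots,\{x_{i,2t-2},x_{i,2t-1}\},\{x_{i,2t},y_{i,2t}\}\big\}.
\]
Here $w=y_{w_{i'}}$, but the whisker count stays equal to $k$. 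I therefore have to show that the cycle part of $m_r$ precedes that of $m_\ell$ in the lexicographic edge order of Notation~\ref{setup-cycle}. Indeed, the cycle part of $m_r$ replaces the edges $\{x_{i,2s-1},x_{i,2s}\}$ by $\{x_{w_{i'}},x_{i,1}\}$ and the shifted edges, and $\{x_{w_{i'}},x_{i,1}\}$ is smaller than $\{x_{i,1},x_{i,2}\}$ because $x_{w_{i'}}$ precedes $H_i$ in the labelling. This is exactly where the convention $x_{w_{i'}}<x_{w_{i''}}$ in Discussion~\ref{case-as} is used.

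The main obstacle is this last comparison, together with the wrap-around edge $\{x_1,x_n\}$, which is the largest edge in the order. If $\{x_{w_{i'}},x_{i,1}\}=\{x_n,x_1\}$, the lexicographic argument fails. In that situation I would first try to reorient $H_i$, that is, relabel so that the path is read from the other end. I would also check that the normalization "choose the minimal $e(M)$ for equal monomials" does not interfere. A second, smaller obstacle is the boundary situation where the alternating path exhausts $H_i$ (for instance $2j_i=n_i$), so that the next vertex would be $x_{w_{i''}}$. I expect this to be excluded by the choice of $S_{H_i}$, or handled by the same exchange with $w=y_{w_{i''}}$.
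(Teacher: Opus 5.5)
\textbf{A genuine gap, shared with the paper's proof.} Your exchanges are the same as the paper's in all three types. Your remark that the block index $k$ equals the number of $y$-variables dividing the monomial is a useful precision the paper leaves implicit. The gap is the step you flagged, the case $z=y_{i,2t}$, but it is wider than you diagnose and reorienting does not close it.

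Set $e_s=\{x_{i,2s-1},x_{i,2s}\}$, $f_0=\{x_{w_{i'}},x_{i,1}\}$ and $f_s=\{x_{i,2s},x_{i,2s+1}\}$. Since $k$ is unchanged, $m_r\prec m_\ell$ needs $B'=(B\setminus\{e_1,\dots,e_t\})\cup\{f_0,\dots,f_{t-1}\}$ to be lex-smaller than $B$. That means the smallest edge of the symmetric difference must be some $f_s$; comparing $f_0$ with $e_1$ alone does not suffice.

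This holds for a component lying between consecutive whisker vertices $x_{w_j},x_{w_{j+1}}$, since there $f_0$ is the smallest edge of the whole segment. It fails for the component through $\{x_n,x_1\}$. When $\gamma'\ge 2$, the convention $w_{i'}<w_{i''}$ labels that component downward from $x_{w_1}$. Then $f_0=\{x_{w_1-1},x_{w_1}\}>e_1$ whenever $w_1\ge 3$, not only when $f_0=\{x_n,x_1\}$.

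Concretely, take $n=6$, $q=4$, $M=\{x_3y_3,x_5y_5,x_1x_2\}$. Then $H_i=x_2x_1x_6$ and $S_{H_i}=\{y_1,x_6\}$. Any $m$ with $(m:m_\ell)=(y_1)$ and $m\prec m_\ell$ must have $m=y_1m_\ell/w$ with $w\in\{y_3,y_5\}$. The only valid one is $x_1y_1x_2x_3x_5y_5$, whose cycle part $\{x_2,x_3\}$ comes after $\{x_1,x_2\}$. So no $m_r$ exists. The paper's proof asserts the same unjustified inequality $\{x_{w_{i'}},x_{i,1}\}<\{x_{i,1},x_{i,2}\}$, so you have reproduced its argument together with this flaw.

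Reorienting $H_i$ does not rescue the statement in general. If both endpoints of the wrap-around component are covered by $B$, the component is of Type 2 in either orientation. From the increasing side, the alternating segment can then pass through $\{x_1,x_2\}\in B$.

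Take $n=10$, $q=6$, $M=\{x_6y_6,x_8y_8,x_9x_{10},x_1x_2,x_4x_5\}$.
\begin{itemize}
\item Orienting from $x_6$ gives $y_4\in S_{H_i}$. The exchange is forced, since $x_5$ must go to $x_6$. It yields cycle part $\{x_1x_2,x_5x_6,x_9x_{10}\}$, which is lex-larger than $\{x_1x_2,x_4x_5,x_9x_{10}\}$.
\item Orienting from $x_8$ gives $y_2\in S_{H_i}$. Now $x_1$ must go to $x_{10}$ and $x_9$ to $x_8$. This yields $\{x_4x_5,x_8x_9,x_{10}x_1\}$, again lex-larger than $B$.
\end{itemize}
So the lemma as stated is false for such $m_\ell$, whichever orientation is chosen. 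A correct version must either shrink $S_{H_i}$ for the wrap-around component or change the order on cycle parts inside $\mathcal M^{q-1}_k$. In either case the regularity estimate in the proof of Theorem~\ref{main} has to be redone.

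Your second obstacle is minor. When $2j_i=n_i$, the would-be vertex $x_{i,2j_i+1}$ is $x_{w_{i''}}$, which already divides $m_\ell$. It must simply be dropped from $S_{H_i}$; no exchange, including one releasing $y_{w_{i''}}$, gives $(m_r:m_\ell)=(x_{w_{i''}})$.
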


\begin{proof}
Let $M_\ell$ be the $(q-1)$-matching corresponding to the monomial $m_\ell$.

\vskip 1mm
\noindent
\textit{Case 1.} Suppose $1 \le i \le \gamma_1'$. Then
$\supp(B_i) \cap \{x_{i,1}, x_{i,n_i}\} = \emptyset.$
Hence $x_{i,1}$ and $x_{i,n_i}$ are not incident to any edge of $B_i$, and the edges $\{x_{w_{i'}},y_{w_{i'}}\}$ and $\{x_{w_{i''}},y_{w_{i''}}\}$ belong to $M_\ell$.
Define
$M_r := M_\ell \setminus \{\{x_{w_{i'}},y_{w_{i'}}\}\} \cup \{\{x_{w_{i'}}, x_{i,1}\}\},$
and let $m_r$ be the corresponding monomial. Then $M_r$ is a $(q-1)$-matching of $G$, and
$(m_r : m_\ell) = (x_{i,1}).$
Moreover, the number of whisker edges in $M_r$ is strictly less than that in $M_\ell$, and hence $m_r \prec m_\ell$.
Similarly, define
$M_{r'} := M_\ell \setminus \{\{x_{w_{i''}},y_{w_{i''}}\}\} \cup \{\{x_{w_{i''}}, x_{i,n_i}\}\},$
and let $m_{r'}$ be the corresponding monomial. Then $M_{r'}$ is a $(q-1)$-matching of $G$, and
$(m_{r'} : m_\ell) = (x_{i,n_i}),$
and $m_{r'} \prec m_\ell$.

\vskip 1mm
\noindent
\textit{Case 2.} Suppose $\gamma_1'+1 \le i \le \gamma_2'$. Then
$S_{H_i} = \{y_{i,2}, y_{i,4}, \ldots, y_{i,2j_i}, x_{i,2j_i+1}\}.$
Let
$x_{i,1}, x_{i,2}, \ldots, x_{i,2j_i}$
be a longest alternating path in $H_i$ starting at $x_{i,1}$.
First assume that $z = x_{i,2j_i+1}$. Define
\[
\begin{aligned}
M_r :=\;& M_\ell 
\setminus \Big( \{\{x_{w_{i'}},y_{w_{i'}}\}\} 
\cup \{\{x_{i,2t-1},x_{i,2t}\} \mid 1 \le t \le j_i\} \Big) \\
&\cup \Big( \{\{x_{w_{i'}},x_{i,1}\}\}
\cup \{\{x_{i,2t},x_{i,2t+1}\} \mid 1 \le t \le j_i\} \Big).
\end{aligned}
\]
Then $M_r$ is a $(q-1)$-matching of $G$. Let $m_r$ be the corresponding monomial. By construction,
$(m_r : m_\ell) = (x_{i,2j_i+1}).$
Moreover, the number of whisker edges in $M_r$ is strictly less than that in $M_\ell$, and hence $m_r \prec m_\ell$.
Next, let $z = y_{i,2s}$ for some $1 \le s \le j_i$. Define
\[
\begin{aligned}
M_r :=\;& M_\ell 
\setminus \Big( \{\{x_{w_{i'}},y_{w_{i'}}\}\} 
\cup \{\{x_{i,2t-1},x_{i,2t}\} \mid 1 \le t \le s\} \Big) \\
&\cup \Big( \{\{x_{w_{i'}},x_{i,1}\}\}
\cup \{\{x_{i,2t},x_{i,2t+1}\} \mid 1 \le t \le s-1\} 
\cup \{\{x_{i,2s},y_{i,2s}\}\} \Big).
\end{aligned}
\]
Then $M_r$ is a $(q-1)$-matching of $G$. Let $m_r$ be the corresponding monomial. Then
$(m_r : m_\ell) = (y_{i,2s}),$
and, since $\{x_{w_{i'}},x_{i,1}\} < \{x_{i,1},x_{i,2}\}$ under the ordering in Notation~\ref{setup-cycle}, we have $m_r \prec m_\ell$.

\vskip 1mm
\noindent
\textit{Case 3.} Suppose $\gamma_2'+1 \le i \le \gamma'$. Then
$S_{H_i} = \{x_{i,1}, x_{i,n_i-2f_i}\}.$
First assume that $z = x_{i,1}$. Since $x_{i,1} \notin \supp(B_i)$, the argument of Case~1 applies, and hence there exists a monomial $m_r$ such that
$(m_r : m_\ell) = (x_{i,1})$
and
$m_r \prec m_\ell.$
Next, let $z = x_{i,n_i-2f_i}$. Consider a longest alternating path in $H_i$ starting at $x_{i,n_i}$, say
$x_{i,n_i}, x_{i,n_i-1}, \ldots, x_{i,n_i-(2f_i-1)},$
such that
$\{x_{i,n_i-(2t)},x_{i,n_i-(2t+1)}\} \in B_i$
for  $0 \le t \le f_i-1.$ 
Define
\[
\begin{aligned}
M_r :=\;& M_\ell 
\setminus \Big( \{\{x_{w_{i''}},y_{w_{i''}}\}\}
\cup \{\{x_{i,n_i-(2t)},x_{i,n_i-(2t+1)}\} \mid 0 \le t \le f_i-1\} \Big) \\
&\cup \Big( \{\{x_{w_{i''}},x_{i,n_i}\}\}
\cup \{\{x_{i,n_i-(2t+1)},x_{i,n_i-(2t+2)}\} \mid 0 \le t \le f_i-2\} \\
&\qquad \cup \{\{x_{i,n_i-(2f_i-1)},x_{i,n_i-2f_i}\}\} \Big).
\end{aligned}
\]

Then $M_r$ is a $(q-1)$-matching of $G$. Let $m_r$ be the corresponding monomial. By construction,
$(m_r : m_\ell) = (x_{i,n_i-2f_i}).$
Moreover, the number of whisker edges in $M_r$ is strictly less than that in $M_\ell$, and hence $m_r \prec m_\ell$.
\end{proof}

We now state and prove the main result of this paper.
\begin{theorem}\label{main}
Let $G = W(C_n)$. Then, for $1 \le q \le n$,
\[
\reg\big(I(G)^{[q]}\big)
= 2q + \left\lfloor \frac{n - q - 1}{2} \right\rfloor.
\]
\end{theorem}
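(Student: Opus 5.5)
The lower bound $\reg(I(G)^{[q]}) \ge 2q+\lfloor (n-q-1)/2\rfloor$ was already established in \cite{DRS24}, so the plan is to prove only the matching upper bound. I will argue by induction on $q$. The case $q=1$ is Theorem~\ref{reg-known}, since $\reg(I(W(C_n)))=2+\lfloor (n-2)/2\rfloor$. For $q\ge 2$, I will use the ordered generators $m_1\prec\cdots\prec m_\alpha$ of $I(G)^{[q-1]}$ from Notation~\ref{setup-whisker} and the standard filtration
\[
I(G)^{[q]} \subseteq (I(G)^{[q]},m_1)\subseteq\cdots\subseteq (I(G)^{[q]},m_1,\ldots,m_\alpha)=I(G)^{[q-1]} .
\]
Each step of the filtration gives a short exact sequence. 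Iterating these sequences, it suffices to show
\[
\reg\big(((I(G)^{[q]},m_1,\ldots,m_{\ell-1}):m_\ell)\big)+2(q-1)\le 2q+\Big\lfloor\tfrac{n-q-1}{2}\Big\rfloor
\]
for every $\ell$. Together with the induction hypothesis $\reg(I(G)^{[q-1]})\le 2q-2+\lfloor (n-q)/2\rfloor\le 2q+\lfloor (n-q-1)/2\rfloor$, this yields the bound.

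By Lemma~\ref{order-whisker} and Theorem~\ref{Thm-even}, each such colon ideal equals $I(G^{M_\ell})+(u_1,\ldots,u_s)$ for some variables $u_i$. Its regularity is therefore $\reg(I(G^{M_\ell}\setminus\{u_1,\ldots,u_s\}))$, which is bounded by $\reg(I(G^{M_\ell}\setminus U))$ for any subset $U$ of the variables that are forced to appear (using part (3) of the collected regularity results and the induced subgraph inequality). The target is $\reg\le 2+\lfloor (n-q-1)/2\rfloor$. I would split by the number $k$ of whiskers in $M_\ell$.

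If $k=0$, then $M_\ell=N$ is a $(q-1)$-matching of $C_n$, and Lemma~\ref{tech-cycle} applies directly. If $k\ge1$, Discussion~\ref{case-as} gives the decomposition $G^{M_\ell}=\coprod_i W(H_i)^{B_i}$. Lemma~\ref{main-tech-lemma} shows that every vertex of $\bigcup_i S_{H_i}$ lies among the $u$'s, so I may delete the sets $S_{H_i}$ from every component. The remaining components are then (up to isolated vertices) whiskered paths with shorter underlying path, or graphs $W(P)^{B'}$ in which the path has been shortened. I will bound each component by Theorem~\ref{reg-known} or Lemma~\ref{tech-path}, and sum the bounds using the disjoint-union formula $\reg=\sum\reg-(\#-1)$.

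The main obstacle is the counting at the end. For each component I will obtain a bound of the form $1+\lfloor (n_i'-b_i)/2\rfloor$. Here the deletion of $S_{H_i}$ removes two path vertices in case (1), and in cases (2) and (3) it cuts off an alternating stretch (where the $y$'s are removed) together with one boundary vertex. I must then check that
\[
1+\sum_i \Big\lfloor \tfrac{n_i'-b_i}{2}\Big\rfloor \le 1+\Big\lfloor \tfrac{n-q-1}{2}\Big\rfloor ,
\]
using $\sum_i n_i = n-k$, $\sum_i b_i=q-1-k$, and the removed vertices. In case (1), removing both endpoints saves $2$ vertices per component, which gives slack. The delicate point is components of type (2) and (3), where the bound per component must absorb the floor losses. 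I expect to handle this by bounding each component by $\lfloor(\text{vertices remaining}-\text{matching edges remaining})/2\rfloor+1$, and then checking that the total savings are at least $k+1$. If this is too tight, the fallback is the inequality $\reg(I(G^{M}))\le \max\{\reg(I(G^M\setminus x)),\reg(I(G^M\setminus N[x]))+1\}$ of \cite[Lemma~3.2]{huneke}, applied at a whisker vertex $x_{w_1}$ exactly as in the proof of Lemma~\ref{tech-cycle}.
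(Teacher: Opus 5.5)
\textbf{Verdict: genuine gap.} Your outline follows the paper's proof step for step, so it inherits a false step from the paper. The shared route is: induction on $q$; the filtration by the generators of $I(G)^{[q-1]}$ in the order of Notation~\ref{setup-whisker}; Lemma~\ref{order-whisker} with Theorem~\ref{Thm-even}; Lemma~\ref{tech-cycle} for $k=0$; and, for $k\ge1$, deleting $\bigcup_i S_{H_i}$ via Lemma~\ref{main-tech-lemma} followed by blockwise estimates.

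The step that fails is deleting the vertices $y_{i,2s}\in S_{H_i}$ for components of type (2) in Discussion~\ref{case-as}, i.e.\ those with $x_{i,1}\in\supp(B_i)$. The swap that is supposed to put $y_{i,2s}$ into the colon does not change the number of whiskers. So whether $m_r\prec m_\ell$ holds is decided by the lexicographic order of the cycle parts. The inequality $\{x_{w_{i'}},x_{i,1}\}<\{x_{i,1},x_{i,2}\}$ invoked for this fails, for example, when $x_{w_{i'}}=x_n$ and $x_{i,1}=x_1$, since $\{x_1,x_n\}$ is the largest edge. The mirror swap at the other end of the path enlarges the cycle part as well.

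Here is a concrete counterexample. Take $n=6$, $q=4$ and $M_\ell=\{\{x_1,x_2\},\{x_4,x_5\},\{x_6,y_6\}\}$, so $m_\ell=x_1x_2x_4x_5x_6y_6$.
\begin{enumerate}
\item Here $H_1=x_1x_2x_3x_4x_5$ has both endpoints in $\supp(B_1)$. Whichever end is called $x_{1,1}$, it is of type (2), and $S_{H_1}$ contains $y_2$ or $y_4$.
\item The generators preceding $m_\ell$ are $x_1\cdots x_6$, $x_1x_2x_3x_4x_5y_5$, $x_1x_2x_3x_4x_6y_6$, and possibly $x_1x_2x_3x_4x_5y_3$. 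Their colons with $m_\ell$ are $x_3$, $x_3y_5$, $x_3$, $x_3y_3$.
\item The graph $G^{M_\ell}$ has edges $x_3y_1,\ x_3y_3,\ x_3y_5,\ y_1y_2,\ y_4y_5$.
\item Hence
\[
\big((I(G)^{[4]},m_1,\ldots,m_{\ell-1}):m_\ell\big)=(x_3,\,y_1y_2,\,y_4y_5),
\]
whose regularity is $3>2=2+\lfloor (n-q-1)/2\rfloor$.
\item The matchings that would put $y_2$ or $y_4$ into this colon are $\{\{x_2,y_2\},\{x_1,x_6\},\{x_4,x_5\}\}$ and $\{\{x_4,y_4\},\{x_5,x_6\},\{x_1,x_2\}\}$. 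Both have lexicographically larger cycle parts than $\{\{x_1,x_2\},\{x_4,x_5\}\}$, so both come after $m_\ell$.
\end{enumerate}

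Consequences:
\begin{enumerate}
\item With this ordering, the per-colon bound you reduce to is false. The filtration can yield at best $\reg(I(G)^{[4]})\le 9$, not $8$.
\item The colon has regularity exactly $3$, so no sharper counting can rescue it. Neither can your fallback via \cite[Lemma~3.2]{huneke}; besides, $x_{w_1}\in\supp(M_\ell)$ is not even a vertex of $G^{M_\ell}$.
\item A repair needs either a different order on generators with the same number of whiskers (then recheck the colons of the generators moved earlier), or a separate argument for such components.
\end{enumerate}

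A smaller point: the inequality you display should have $2+\lfloor (n-q-1)/2\rfloor$ on the right, not $1+\lfloor (n-q-1)/2\rfloor$. Where the deletions are legitimate, the count works block by block: each block satisfies $\reg-1\le\lfloor (n_i-b_i)/2\rfloor$, and these sum using $\sum_i(n_i-b_i)=n-q+1$.
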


\begin{proof}
By \cite[Corollary~5.3]{DRS24}, we have
$2q+\left\lfloor \frac{n-q-1}{2}\right\rfloor
\le \reg\big(I(G)^{[q]}\big).$
It remains to prove the reverse inequality. We proceed by induction on $q$. 
The case $q=1$ follows from Theorem~\ref{reg-known}. Assume that $q \ge 2$.
Let $m_1 \prec m_2 \prec \cdots \prec m_\alpha$ be the minimal generators of $I(G)^{[q-1]}$, ordered as in Notation~\ref{setup-whisker}. 
By Lemma~\ref{order-whisker} and \cite[Lemma~3.2]{huneke}, we obtain
\begin{align*}
\reg\big(I(G)^{[q]}\big)
\le \max \Big\{ &
\reg\big((I(G)^{[q]},m_1,\ldots,m_{i-1}):m_i\big)+2(q-1) \;\text{for } 1 \le i \le \alpha, \\
&\reg\big(I(G)^{[q-1]}\big)
\Big\}.
\end{align*}
By the induction hypothesis,
$\reg\big(I(G)^{[q-1]}\big)
\le 2(q-1)+\left\lfloor \frac{n-q}{2}\right\rfloor
\le 2q+\left\lfloor \frac{n-q-1}{2}\right\rfloor.$
Thus, it suffices to show that, for all $1 \le \ell \le \alpha$,
$\reg\big((I(G)^{[q]},m_1,\ldots,m_{\ell-1}):m_\ell\big)
\le 2+\left\lfloor \frac{n-q-1}{2}\right\rfloor.$
Assume Notation~\ref{setup-whisker}.

\medskip
\noindent
\textit{Case 1.} Suppose $m_\ell \in \mathcal{M}_0^{q-1}$. 
Then, by Theorem~\ref{Thm-even}, Lemma~\ref{order-whisker}, and Lemma~\ref{tech-cycle},
\[
\reg\big((I(G)^{[q]},m_1,\ldots,m_{\ell-1}):m_\ell\big)
\le \reg\big(I(G)^{[q]}:m_\ell\big)
\le 2+\left\lfloor \frac{n-q-1}{2}\right\rfloor.
\]

\medskip
\noindent
\textit{Case 2.} Suppose $m_\ell \in \mathcal{M}_k^{q-1}$, where $0<k<q-1$. Let
$M = B \cup \big\{\{x_{w_1},y_{w_1}\},\ldots,\{x_{w_k},y_{w_k}\}\big\}$
be the corresponding $(q-1)$-matching of $G$, where $B \subseteq E(C_n)$ is a matching.  
Assume the notation as in Discussion~\ref{case-as}. By \eqref{eq:GM-decomp}, we have
$G^M = G^B \setminus \{x_{w_1},\ldots,x_{w_k}\}
= \coprod_{i=1}^{\gamma'} W(H_i)^{B_i},$
where $B_i := B \cap E(H_i)$.
Set
$S := \bigcup_{i=1}^{\gamma'} S_{H_i}.$
By Lemma~\ref{main-tech-lemma}, we have
\[
\reg\big((I(G)^{[q]},m_1,\ldots,m_{\ell-1}):m_\ell\big)
\le \reg\big(I(G^M \setminus S)\big).
\]
Moreover,
$G^M \setminus S
= \coprod_{i=1}^{\gamma'} \big(W(H_i)^{B_i} \setminus S_{H_i}\big).$
Note that it may happen that $W(H_i)^{B_i} \setminus S_{H_i}$ has no edges, i.e., it consists only of isolated vertices. In this case, its regularity is equal to $1$. Hence, without loss of generality, we may assume that each block contains at least one edge.
We now estimate the regularity of each component.

\begin{enumerate}
\item For $1 \le i \le \gamma_1'$, by Proposition~\ref{del-even}, we have
$W(H_i)^{B_i} \setminus S_{H_i}
= W(H_i \setminus S_{H_i})^{B_i}.$
Moreover, $H_i \setminus S_{H_i}$ is a path on $n_i-2$ vertices, and $|B_i|=b_i$. Hence, by Lemma~\ref{tech-path},
\[
\reg\big(I(W(H_i \setminus S_{H_i})^{B_i})\big)
\le 2+\left\lfloor \frac{(n_i-2)-(b_i+1)}{2} \right\rfloor.
\]

\item For $\gamma_1'+1 \le i \le \gamma_2'$, define $\widetilde{H_i}$ to be the path on the vertex set
$\{x_{i,2j_i+2}, \ldots, x_{i,n_i}\},$
and set $\widetilde{B_i} := B_i \cap E(\widetilde{H_i})$. Let $\widetilde{b_i} := |\widetilde{B_i}|$. 
Then
$W(H_i)^{B_i} \setminus S_{H_i} \cong W(\widetilde{H_i})^{\widetilde{B_i}}$
up to isolated vertices. Indeed, after removing the vertices
$y_{i,2}, y_{i,4}, \ldots, y_{i,2j_i},$
the vertices $y_{i,1}, y_{i,3}, \ldots, y_{i,2j_i-1}$ become isolated. 
Moreover,
$|V(\widetilde{H_i})| = n_i - (2j_i+1)$
and $b_i = \widetilde{b_i} + j_i.$
Therefore, by Lemma~\ref{tech-path},
\begin{align*}
    \reg\big(I(W(H_i)^{B_i} \setminus S_{H_i})\big)
= \reg\big(I(W(\widetilde{H_i})^{\widetilde{B_i}})\big) 
\leq &  2+\left\lfloor \frac{n_i - \widetilde{b_i} - 2j_i - 2}{2} \right\rfloor\\
\leq & 2+\left\lfloor \frac{n_i - b_i - j_i - 2}{2} \right\rfloor
\end{align*}

\item For $\gamma_2'+1 \le i \le \gamma'$, we have
$W(H_i)^{B_i} \setminus S_{H_i}
\cong W(H_{i,1})^{B_{i,1}} \coprod W(H_{i,2})^{B_{i,2}}$
up to isolated vertices, where $H_{i,1}$ is the path on the vertex set
$\{x_{i,2}, \ldots, x_{i,n_i-(2f_i+1)}\},$
and $H_{i,2}$ is the path on the vertex set
$\{x_{i,n_i-(2f_i-1)}, \ldots, x_{i,n_i}\}.$
Here $B_{i,j} := B_i \cap E(H_{i,j})$ for $j=1,2$, and $B_{i,2}$ is a perfect matching of $H_{i,2}$. 
Therefore, by Lemma \ref{tech-path} and Observation \ref{path-ind},
\begin{align*}
\reg\big(I(W(H_i)^{B_i} \setminus S_{H_i})\big)
&= \reg\big(I(W(H_{i,1})^{B_{i,1}})\big)
+ \reg\big(I(W(H_{i,2})^{B_{i,2}})\big) - 1 \\
&\le \left(2+\left\lfloor \frac{n_{i,1}-b_{i,1}-1}{2} \right\rfloor\right)
+ 2 - 1 
= 3+\left\lfloor \frac{n_{i,1}-b_{i,1}-1}{2} \right\rfloor,
\end{align*}
where $n_{i,1} := |V(H_{i,1})|$ and $b_{i,1} := |B_{i,1}|$.
\end{enumerate}
We now estimate the regularity of the edge ideal of $G^M \setminus S$. Since
$G^M \setminus S
= \coprod_{i=1}^{\gamma'} \big(W(H_i)^{B_i} \setminus S_{H_i}\big),$
it follows that
$\reg\big(I(G^M \setminus S)\big)
= \sum_{i=1}^{\gamma'} \reg\big(I(W(H_i)^{B_i}\setminus S_{H_i})\big)
- (\gamma'-1).$
By the estimates obtained above for each block, we have
\begin{align*}
\reg\big(I(G^M \setminus S)\big)
\le\; & \sum_{i=1}^{\gamma_1'} \left(2+\left\lfloor \frac{n_i-b_i-3}{2}\right\rfloor\right) 
+ \sum_{i=\gamma_1'+1}^{\gamma_2'} \left(2+\left\lfloor \frac{n_i-b_i-j_i-2}{2} \right\rfloor\right) \\
&\quad + \sum_{i=\gamma_2'+1}^{\gamma'} \left(3+\left\lfloor \frac{n_{i,1}-b_{i,1}-1}{2}\right\rfloor\right)
- (\gamma'-1).
\end{align*}

Rewriting, we obtain
\begin{align*}
\reg\big(I(G^M \setminus S)\big)
\le\; & \gamma_1' 
+ \sum_{i=1}^{\gamma_1'} \left\lfloor \frac{n_i - b_i - 1}{2} \right\rfloor
+ \sum_{i=\gamma_1'+1}^{\gamma_2'} \left\lfloor \frac{n_i - b_i - j_i}{2} \right\rfloor \\
&\quad + (\gamma_2' - \gamma_1')
+ 2(\gamma' - \gamma_2')
- (\gamma' - 1) \\
&\quad + \sum_{i=\gamma_2'+1}^{\gamma'} 
\left\lfloor \frac{n_{i,1} - b_{i,1} + 1}{2} \right\rfloor.
\end{align*}

Simplifying constants, we get
\begin{align*}
\reg\big(I(G^M \setminus S)\big)
\le\; & \sum_{i=1}^{\gamma_1'} \left\lfloor \frac{n_i - b_i - 1}{2} \right\rfloor
+ \sum_{i=\gamma_1'+1}^{\gamma_2'} \left\lfloor \frac{n_i - b_i - j_i}{2} \right\rfloor \\
&\quad + \sum_{i=\gamma_2'+1}^{\gamma'} 
\left\lfloor \frac{n_{i,1} - b_{i,1} + 1}{2} \right\rfloor
+ (\gamma' - \gamma_2') + 1.
\end{align*}

Using basic properties of the floor function, it follows that
\begin{align*}
\reg\big(I(G^M \setminus S)\big)
\le\; & \sum_{i=1}^{\gamma_1'} \left\lfloor \frac{n_i - b_i}{2} \right\rfloor
+ \sum_{i=\gamma_1'+1}^{\gamma_2'} \left\lfloor \frac{n_i - b_i}{2} \right\rfloor \\
&\quad + \sum_{i=\gamma_2'+1}^{\gamma'} 
\left\lfloor \frac{(n_{i,1} - b_{i,1}) + (n_{i,2} - b_{i,2})}{2} \right\rfloor
+ (\gamma' - \gamma_2') + 1,
\end{align*}
since $(n_{i,2}-b_{i,2}) \ge 1$.
Again,
\begin{align*}
&\reg\big(I(G^M \setminus S)\big)
\le\;  \\
&\left\lfloor 
\sum_{i=1}^{\gamma_1'} \frac{n_i - b_i}{2}
+ \sum_{i=\gamma_1'+1}^{\gamma_2'} \frac{n_i - b_i}{2}
+ \sum_{i=\gamma_2'+1}^{\gamma'} \frac{n_{i,1} + n_{i,2} - b_{i,1} - b_{i,2}}{2}
+ \frac{2(\gamma' - \gamma_2')}{2}
\right\rfloor + 1.
\end{align*}

Thus,
\begin{align*}
\reg\big(I(G^M \setminus S)\big)
\le\; & \left\lfloor \frac{1}{2} \left( 
\sum_{i=1}^{\gamma_1'} n_i
+ \sum_{i=\gamma_1'+1}^{\gamma_2'} n_i
+ \sum_{i=\gamma_2'+1}^{\gamma'} (n_{i,1} + n_{i,2} + 2)
\right) \right. \\
&\quad \left.
- \frac{1}{2} \left(
\sum_{i=1}^{\gamma_1'} b_i
+ \sum_{i=\gamma_1'+1}^{\gamma_2'} b_i
+ \sum_{i=\gamma_2'+1}^{\gamma'} (b_{i,1} + b_{i,2})
\right) - 1 \right\rfloor + 2.
\end{align*}

Hence,
\begin{align*}
\reg\big(I(G^M \setminus S)\big)
&\le \left\lfloor \frac{n}{2} - \frac{q-1}{2} - 1 \right\rfloor + 2 
= \left\lfloor \frac{n - q - 1}{2} \right\rfloor + 2.
\end{align*}

\medskip
\noindent
\textit{Case 3.} Suppose $m_\ell \in \mathcal{M}_{q-1}^{q-1}$, and let $M$ be the corresponding $(q-1)$-matching. Then
$M = \big\{\{x_{w_1},y_{w_1}\},\ldots,\{x_{w_{q-1}},y_{w_{q-1}}\}\big\},$
that is, $M$ consists entirely of whisker edges. Assume the notation as in Discussion~\ref{case-as}. Then
$G^M = G \setminus \{x_{w_1},\ldots,x_{w_{q-1}}\}
= \coprod_{i=1}^{\gamma_1'} W(H_i),$
where each $H_i$ is a path. Note that $B_i=\emptyset$ for all $1 \le i \le \gamma_1'$.
For each $1 \le i \le \gamma_1'$, set $n_i := |V(H_i)|$. Then
$n = \sum_{i=1}^{\gamma_1'} n_i + (q-1).$
Since $B_i=\emptyset$, we have
$\supp(B_i) \cap \{x_{i,1},x_{i,n_i}\} = \emptyset,$
and hence, as in Discussion~\ref{case-as},
$S_{H_i} = \{x_{i,1}, x_{i,n_i}\}
\quad \text{for all } 1 \le i \le \gamma_1'.$
Set
$S := \bigcup_{i=1}^{\gamma_1'} S_{H_i}.$
By Lemma~\ref{main-tech-lemma},
\[
\reg\big((I(G)^{[q]}, m_1,\ldots,m_{\ell-1}):m_\ell\big)
\le \reg\big(I(G^M \setminus S)\big).
\]
Moreover,
$G^M \setminus S
= \coprod_{i=1}^{\gamma_1'} W(H_i \setminus S_{H_i}).$
Hence,
$\reg\big(I(G^M \setminus S)\big)
= \sum_{i=1}^{\gamma_1'} \reg\big(I(W(H_i \setminus S_{H_i}))\big)
- (\gamma_1' - 1).$
Since $H_i \setminus S_{H_i}$ is a path on $n_i-2$ vertices, by Theorem~\ref{reg-known},
$\reg\big(I(W(H_i \setminus S_{H_i}))\big)
\le 2+\left\lfloor \frac{n_i-3}{2} \right\rfloor.$
Therefore,
\begin{align*}
\reg\big(I(G^M \setminus S)\big)
&\le \sum_{i=1}^{\gamma_1'} \left(2+\left\lfloor \frac{n_i-3}{2} \right\rfloor\right)
- (\gamma_1' - 1) = \sum_{i=1}^{\gamma_1'} \left\lfloor \frac{n_i-1}{2} \right\rfloor + 1.
\end{align*}
Using $\sum_{i=1}^{\gamma_1'} n_i = n-(q-1)$ and the inequality
$\sum_{i=1}^{\gamma_1'} \left\lfloor \frac{n_i-1}{2} \right\rfloor
\le \sum_{i=1}^{\gamma_1'} \left\lfloor \frac{n_i}{2} \right\rfloor \le \left\lfloor \sum_{i=1}^{\gamma_1'} \frac{n_i}{2} \right\rfloor,$
we obtain,

$\reg\big(I(G^M \setminus S)\big)
\le 2+\left\lfloor \frac{n-q-1}{2} \right\rfloor.$
\end{proof}

\vspace*{1mm}
\noindent
\textbf{Acknowledgments.}  
The last authors acknowledge support from the Science and Engineering Research Board (SERB) and the National Board for Higher Mathematics (NBHM).

\vspace*{1mm}
\noindent
\textbf{Data availability statement.}  
Data sharing is not applicable to this article as no datasets were generated or analyzed during the current study.

\vspace*{1mm}
\noindent
\textbf{Conflict of interest.}  
The authors declare that they have no known competing financial interests or personal relationships that could have appeared to influence the work reported in this paper.

\bibliographystyle{abbrv}
\bibliography{refs}

\end{document}